\newtheorem{Th}{Theorem}[section]
\newtheorem{Lemma}[Th]{Lemma}
\newtheorem{Cor}[Th]{Corollary}
\newtheorem{Prop}[Th]{Proposition}
\newcommand{\E}{\mathbb{E}}
\newcommand{\Prob}{\mathbb{P}}
\newcommand{\calZ}{\mathcal{Z}}
\newcommand{\N}{\mathbb{N}}
\newcommand{\Z}{\mathbb{Z}}
\numberwithin{equation}{section}
\author{Lorenz A. Gilch\addressmark{1}}
\title[Rate of Escape of Random Walks on Regular Languages and Free Products by
Amalgamation]{Rate of Escape of Random Walks on Regular Languages and Free
  Products by Amalgamation of Finite Groups}
\address{\addressmark{1}Institut f\"ur Mathematische Strukturtheorie (Math. C),
  Graz University of Technology, Steyrergasse
  30, A-8010 Graz, Austria}
\keywords{Random Walks, Regular Languages, Free Products by Amalgamation, Rate
  of Escape}
\begin{document}
\maketitle
\begin{abstract}
We consider random walks on the set of all words over a finite alphabet such
that in each step only the last two letters of the current word may be modified
and only one letter may be adjoined or deleted. We assume that the transition
probabilities depend only on the last two letters of the current
word. Furthermore, we consider also the special case of random walks on free
products by amalgamation of finite
groups which arise in a natural way from random walks on the
single factors. The aim of this paper is to compute several equivalent formulas
for the rate of escape with respect to natural length functions for these random walks using different techniques.
\end{abstract}

\section{Introduction}

Let $A$ be a finite alphabet and let $A^\ast$ be the set of all finite words
over the alphabet $A$, where $\varepsilon$ is the empty
word. Furthermore, let $l:A \to [0;\infty)$ be a function representing a
`letter length'. The extension of $l$ to $A^\ast$ defined by $l(a_1\dots
a_n)=\sum_{i=1}^n l(a_i)$ gives then a suitable 'word length'.
We consider a transient Markov chain $(X_n)_{n\in\mathbb{N}_0}$ on $A^\ast$
with $X_0=\varepsilon$ such that transition probabilities depend only on the
last two letters of the actual word and in each step only the last two letters
may be modified and only one letter may be
adjoined or deleted. We are interested in whether
the sequence of random variables $l(X_n)/n$ converges almost surely to a
constant, and if so, to compute this constant. If the limit exists, it is called the
\textit{rate of escape}, or the \textit{drift with respect to $l$}. In this
paper, we study this question for random walks on regular languages and on free
products by amalgamation of finite groups, which form
special cases of regular languages and are a generalization of free
products of groups.
\par
It is well-known that the rate of escape w.r.t. the natural word length exists
for a random walk on a finitely generated group, which is governed by a
probability measure on the group. This follows from \textit{Kingman's subadditive
  ergodic  theorem}; see Kingman \cite{kingman}, Derriennic \cite{derrienic} and
Guivarc'h \cite{guivarch}. 
There
are many detailed results for random walks on free products by
amalgamation:
Picardello and Woess \cite{picardello-woess} showed that a locally compact free
product by amalgamation of compact groups acts naturally on a tree. They also derived the
behaviour of the $n$-step transition probabilities. 
Cartwright and Soardi \cite{cartwright-soardi} investigated random walks on
free products by amalgamation, where the amalgamating subgroup is finite and normal. They derived a
formula for the Green function $G(z)=\sum_{n\geq 0} p^{(n)}(e,e)z^n$, where
$p^{(n)}(e,e)$ is the $n$-step return probability from the identity $e$, of the
random walk on the amalgamated product in terms of 
the Green functions of the single factors that is essentially the same as in
Woess \cite{woess3}. For random walks on
free products of finite groups Mairesse and Math\'eus \cite{mairesse1} have
developed a specific technique for the computation of the rate of
escape. For this purpose, they have to solve a more elegant system of algebraic equations than we have to solve, but
our results will be more general.
Three different formulas for the rate of escape of  random walks on free
products of graphs and groups are derived in Gilch \cite{gilch}. The techniques used in \cite{gilch} were the
starting point for the computation of the rate of escape in this paper.
 An important link
between drifts and harmonic analysis was obtained by Varopoulos
\cite{varopoulos}. He proved that for symmetric finite range random walks on groups the existence of non-trivial bounded
harmonic functions is equivalent to a non-zero rate of escape. The recent work
of Karlsson and Ledrappier \cite{karlsson-ledrappier07} generalizes this result to random walks with finite first moment of the step lengths. This leads to a
link between the rate of escape and the entropy of random walks; compare
e.g. with Kaimanovich and Vershik \cite{kaimanovich-vershik} and Erschler
\cite{erschler2}. 
\par
We also consider random walks on regular languages which can be seen as a generalization
of free products by amalgamation. Random Walks on this class of structures have been
investigated by several authors:
\mbox{Malyshev \cite{malyshev},} \cite{malyshev-inria} and Gairat, Malyshev, Menshikov, Pelikh  \cite{malyshev2}
stated criteria for transience, null-recurrence and positive
recurrence. More\-over, \mbox{Malyshev} proved limit theorems concerning existence
of the stationary distribution and speed in the transient case and convergence
of conditional distributions in the ergodic case; in particular, he
showed that the rate of escape w.r.t. the natural word length (that is, $l(\cdot)=1$) is constant and it is strictly
positive if and only if the random walk is transient. Yambartsev and Zamyatin
\cite{yambartsev-zamyatin} proved limit theorems for random walks on two
semi-infinite strings over a finite alphabet.
Lalley \cite{lalley} also investigated random walks on regular languages. He found out that the $n$-step return probabilities must obey one of three
different types of power laws. His analysis is based on a finite algebraic
system of generating functions related to the Green \mbox{function.} This
algebraic system is also used in this paper to compute explicit formulas for
the rate of escape. 
The rate of escape has also been studied on trees, which may be seen as a
special case of our context:
 Nagnibeda and Woess \cite[Section 5]{woess2} proved that the
rate of escape of random walks on trees with finitely many cone types is
non-zero and give a formula for it. One of the techniques used in this paper
for the computation of the rate of escape was motivated by Nagnibeda and Woess.
\par
Our aim is to compute formulas for the rate of escape of random walks on
regular languages and free products by amalgamation of finite groups. In Section \ref{reglanguages}
we compute the rate of escape of random walks on regular languages, while in
Section \ref{amalgams} we compute it for random walks on free products by amalgamation. In Section
\ref{exit-time} we compute the rate of escape analogously to Section
\ref{rate-of-escape} and in Section \ref{dgf} we compute it by an application of a theorem
of Sawyer and Steger \cite{sawyer}. In Section \ref{limitprocesses} we use the
algebraic group structure of free products by amalgamation to compute the rate of escape with
respect to the natural word length. This
approach is based on a technique
which was already used by Ledrappier \cite{ledrappier} and Furstenberg
\cite{furstenberg}. Finally,
in Section \ref{sample} we give sample computations.

\section{Rate of Escape of Random Walks on Regular Languages}
\label{reglanguages}

\subsection{Regular Languages and Random Walks}

Let $A$ be a finite alphabet and $\varepsilon$ be the empty word. A \textit{random walk on a regular language} is a Markov
chain on the set $A^\ast:=\bigcup_{n\geq 1} A^n\cup \{\varepsilon\}$ of all
finite words over the alphabet $A$, whose transition probabilities obey the
following rules:
\begin{enumerate}
\item[(i)] Only the last two letters of the current word may be modified.
\item[(ii)] Only one letter may be adjoined or deleted at one instant of time.
\item[(iii)] Adjunction and deletion may only be done at the end of the current word.
\item[(iv)] Probabilities of modification, adjunction or deletion depend only on the
  last two letters of the current word.
\end{enumerate}
Compare with Lalley \cite{lalley}. The hypothesis that transition probabilities
depend only on the last two letters of the current word can be weakened to
dependence of the last $K\geq 2$ letters by a ``recoding trick'', which is also
described by Lalley. In general, a regular language is a subset of $A^\ast$
whose words are accepted by a finite-state automaton. It is necessary that by
each modification of a word of the regular language in one single step a new
word of the regular language is created. The results below, however, are so
general such that w.l.o.g. \mbox{-- for ease } and better readability -- we may
assume that the regular language consists of the whole \mbox{set $A^\ast$.}
\par
The random walk on $A^\ast$ is described by the sequence of random variables
$(X_n)_{n\in\N_0}$. Initially, we have $X_0=\varepsilon$.
For two words $w,w'\in A^\ast$ we write $ww'$ for the concatenated word.
We use the following abbreviations for the transition probabilities: for $w\in
A^\ast$, $a,a',b\in A$, $b',c'\in A\cup \{\varepsilon\}$, $n\in\N_0$, let be
$$
\begin{array}{c}
\Prob[X_{n+1}=wa'b'c'\mid X_n=wab]  =  p(ab,a'b'c'),\\[0.5ex]
\Prob[X_{n+1}=b'c'\mid X_n=a]  =  p(a,b'c'),\\[0.5ex]
\Prob[X_{n+1}=b'\mid X_n=\varepsilon] =  p(\varepsilon,b').
\end{array}
$$
If we want to start the random walk at $w\in A^\ast$ instead of $\varepsilon$,
we write for short $\Prob_w[\,\cdot\,]:=\Prob[\,\cdot \mid X_0=w]$. 
Suppose
we are given a function $l:A\to [0;\infty)$. We extend $l$ to $A^\ast$ by defining
$l(a_1a_2\dots a_n):=\sum_{i=1}^n l(a_i)$ for \mbox{$a_1a_2\dots a_n\in A^n$.} Additionally, we set $l(\varepsilon):=0$. If $l(a)=1$ for each $a\in A$,
then $l$ is just the \textit{natural word length} which is denoted by $|\cdot
|$. If there is a non-negative constant $\ell$ such that
$$
\lim_{n\to\infty}\frac{l(X_n)}{n} = \ell \quad \textrm{ almost surely,}
$$
then $\ell$ is called the \textit{rate of escape} with respect to $l$. 
Malyshev \cite{malyshev} proved that the rate of escape w.r.t. the natural word length
exists. Furthermore, by Malyshev follows that the rate of escape w.r.t. $l$ is zero if and only if $(X_n)_{n\in\N_0}$ is recurrent. Our aim is
to compute a formula for $\ell$ in the transient case. Therefore, we assume
from now on transience of $(X_n)_{n\in\N_0}$.
\par
Moreover, we assume that the random walk on $A^\ast$ is \textit{suffix-irreducible},
that is, for all $w\in A^\ast$ with $\Prob[X_m=w]>0$ for some $m\in\N$ and for
all $ab\in A^2$ there is some $n\in\N$ such that 
$$
\Prob\Bigl[ \exists w_1\in A^\ast : X_n=ww_1ab, \forall k<n: |X_k|\geq |w| \,\Bigl|\, X_0=w\Bigr]>0.
$$
If suffix-irreducibility is dropped, then the rate of escape may be
non-deterministic; e.g., if $A=\{a,b\}$ with $l(a)=l(b)=1$ and
$p(aa,aaa)=p>1/2$, $p(aa,a)=p(a,\varepsilon)=1-p$,
$p(\varepsilon,a)=p(\varepsilon,b)=1/2$, 
$p(bb,bbb)=q>1/2$, $p(bb,b)=p(b,\varepsilon)=1-q$ with $p\neq
q$, then $l(X_n)/n$ converges only non-deterministically.

\subsection{The Rate of Escape}
\label{rate-of-escape}
The technique we use to compute $\ell$ was motivated by Nagnibeda and Woess
\cite[Section 5]{woess2}. For $k\in\N_0$ we define the \textit{$k$-th exit time} as
$$
\mathbf{e}_k:=\sup \bigl\lbrace m\in\N_0 \,\bigl|\, |X_m|=k\bigr\rbrace.
$$
As the alphabet $A$ is finite
and the random walk on $A^\ast$ is assumed to be transient, we have
$\mathbf{e}_k<\infty$ almost surely for every $k\in\N_0$. Furthermore, we write $\mathbf{W}_k:=X_{\mathbf{e}_k}$ and
$\mathbf{i}_k:=\mathbf{e}_k-\mathbf{e}_{k-1}$ with $\mathbf{e}_{-1}:=0$. We
show at first that $(\mathbf{W}_k,\mathbf{i}_k)_{k\geq 3}$ is a Markov chain. For this purpose, we
introduce some useful functions: for $a,b,c\in A$ and real $z>0$ define
\begin{eqnarray*}
H(ab,c|z) & := & \sum_{n=1}^\infty \Prob_{ab}\bigl[X_n=c,\forall m<n:
|X_m|>1\bigr]\, z^n,\\
\xi(abc) & :=& \sum_{a'b'c'\in A^3} p(bc,a'b'c')\cdot \Bigl(1- \sum_{d\in A} H(b'c',d|1)\Bigr).
\end{eqnarray*}
Observe that
$$
\Prob_{abc}\bigl[ X_n=ab', \forall m<n:|X_m|>2\bigr]
= \Prob_{bc}\bigl[ X_n=b',\forall m<n:|X_m|>1\bigr],
$$
as the transition probabilities depend only on the last two letters of the
current word and in each step only one letter may be deleted. Thus, the number $\xi(abc)$ is the probability of starting at $abc\in A^3$ such that
$|X_n|\geq 4$ for all $n\geq 1$, and it does not depend on the letter
``$a$''. Furthermore, let be $[a_1\dots a_n]_3:=a_{n-2}a_{n-1}a_n$, if
$a_1\dots a_n\in A^\ast$ with $n\geq 3$.
With this notation we get:
\begin{Prop}\label{prop-e_k}
The stochastic process $(\mathbf{W}_k,\mathbf{i}_k)_{k\geq 3}$ is a Markov chain with transition probabilities
\begin{eqnarray*}
&& \mathbb{P}\bigl[\mathbf{W}_{k+1}=x_{k+1},\mathbf{i}_{k+1}=n_{k+1}\, \bigl|\,
\mathbf{W}_k=x_k,\mathbf{i}_k=n_k\bigr]  \\[2ex]
& = & \frac{\xi([x_{k+1}]_3)}{\xi([x_{k}]_3)}\cdot \Prob_{x_k}\bigl[X_{n_{k+1}}=x_{k+1}, \forall i\in\{1,\dots,n_{k+1}\}: |X_i|>k\bigr]
\end{eqnarray*}
for $n_k,n_{k+1}\in\mathbb{N}$, $x_k,x_{k+1}\in A^\ast$ with $|x_k|=k$, 
$|x_{k+1}|=k+1$ and \mbox{$\Prob\bigl[\mathbf{W}_k=x_k,\mathbf{i}_k=n_k\bigr]>0$.}
\end{Prop}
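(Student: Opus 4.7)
The plan is to compute both joint probabilities
$\Prob\bigl[\bigcap_{j=3}^{k+1}\{\mathbf{W}_j = x_j, \mathbf{i}_j = n_j\}\bigr]$
and
$\Prob\bigl[\bigcap_{j=3}^{k}\{\mathbf{W}_j = x_j, \mathbf{i}_j = n_j\}\bigr]$
directly, to exhibit each as a common ``past'' factor times a tail, and then to
read off the transition kernel from their quotient. The central obstacle is
that $\mathbf{e}_k$ is a last-visit time rather than a stopping time, so the
strong Markov property is not directly available; the remedy is to parametrise
the event by deterministic candidate times and then invoke the ordinary
Markov property of $(X_n)$ at each of them.

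On the event $\bigcap_{j=3}^{k}\{\mathbf{W}_j=x_j,\mathbf{i}_j=n_j\}$ there exist
times $0<t_3<t_4<\dots<t_k$ with $t_j-t_{j-1}=n_j$ for $j\ge 4$ such that
$X_{t_j}=x_j$, $|X_s|\ge j$ for all $s\in(t_{j-1},t_j]$ (for $j=4,\dots,k$), and
$|X_s|>k$ for all $s>t_k$; the tail condition automatically forces
$t_j=\mathbf{e}_j$ for every $j$. Summing over the possible value of $t_3$
(which together with $n_4,\dots,n_k$ determines $t_4,\dots,t_k$) and applying
the Markov property at each deterministic time $t_3,\dots,t_k$ yields
\[
\Prob\Bigl[\bigcap_{j=3}^{k}\{\mathbf{W}_j=x_j,\mathbf{i}_j=n_j\}\Bigr]
= A\cdot \Prob_{x_k}\bigl[|X_n|>k\ \forall\,n\ge 1\bigr],
\]
where $A=A(x_3,\dots,x_k,n_3,\dots,n_k)$ bundles the initial factor with the
middle factors
$\Prob_{x_{j-1}}\bigl[X_{n_j}=x_j,\,|X_i|>j-1\ \forall\,1\le i\le n_j\bigr]$
for $j=4,\dots,k$.

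The tail factor equals $\xi([x_k]_3)$: by the hypothesis that transitions
depend only on the last two letters and that only the last two may be
modified, the first $k-3$ letters of $x_k$ act as a rigid ``floor'' and may be
removed, reducing the event to $\{|X_n|\ge 4\ \forall\,n\ge 1\}$ starting from
the length-$3$ word $[x_k]_3$, which is $\xi([x_k]_3)$ (and, as already remarked
in the text, depends only on the last two letters of $x_k$). Performing the
same decomposition one level higher and splitting the new tail at time
$n_{k+1}$ via the Markov property gives
\[
\Prob\Bigl[\bigcap_{j=3}^{k+1}\{\mathbf{W}_j=x_j,\mathbf{i}_j=n_j\}\Bigr]
= A\cdot
\Prob_{x_k}\bigl[X_{n_{k+1}}=x_{k+1},\,|X_i|>k\ \forall\,1\le i\le n_{k+1}\bigr]
\cdot \xi([x_{k+1}]_3),
\]
with exactly the same factor $A$ as before.

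Dividing the two identities cancels $A$ and produces precisely the formula of
the proposition. Because the resulting expression depends only on $x_k$ (and
not on $n_k$ or on any earlier $x_j,n_j$), summing out the past shows that
$\Prob[\mathbf{W}_{k+1}=x_{k+1},\mathbf{i}_{k+1}=n_{k+1}\mid\mathbf{W}_k=x_k,\mathbf{i}_k=n_k]$
coincides with the same expression, which simultaneously establishes that
$(\mathbf{W}_k,\mathbf{i}_k)_{k\ge 3}$ is a Markov chain and identifies its
transition probabilities. The delicate point I expect to spend most care on is
verifying that the past factor $A$ really is identical in the two
decompositions, i.e.\ that refining the level-$k$ tail
$\{|X_s|>k\ \forall\,s>t_k\}$ into (middle segment to $x_{k+1}$)\,$\times$\,(level-$(k+1)$ tail) does not disturb any earlier factor; this is exactly what the
Markov property of $(X_n)$ at time $t_k$ guarantees.
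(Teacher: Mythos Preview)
Your proposal is correct and follows essentially the same strategy as the paper: express the joint probability of the history up to level $k$ and up to level $k+1$ as a common ``past'' factor times a tail, identify the tail with $\xi([\,\cdot\,]_3)$ via the floor argument, and take the quotient. The only cosmetic difference is that the paper conditions on the full history $(\mathbf{W}_j,\mathbf{i}_j)_{0\le j\le k}$, so that the candidate times $n_0+\dots+n_j$ are pinned down without any summation, whereas you condition on $(\mathbf{W}_j,\mathbf{i}_j)_{3\le j\le k}$ and therefore sum over the unknown location $t_3$; the resulting quotient, and the observation that it depends only on $x_k$, are identical.
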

\begin{proof}
Let be $n_0,n_1,\dots,n_{k+1}\in\N$ and $x_0,x_1,\dots,x_{k+1}\in A^\ast$ with
$|x_j|=j$ for \mbox{$j\in\{0,1,\dots,k+1\}$.} Define the event
$$
\bigl[\mathbf{W}_0^m=x_0^m,\mathbf{i}_0^m=n_0^m\bigr] :=
\bigl[\forall j\in\{0,1,\dots,m\}:\mathbf{W}_j=x_j,\mathbf{i}_j=n_j\bigr],
$$
where $m\in\{k,k+1\}$. With this notation we get
\begin{eqnarray*}
\mathbb{P}\bigl[\mathbf{W}_0^k=x_0^k,\mathbf{i}_0^k=n_0^k\bigr]
&=& \Prob\left[
\begin{array}{c}
\forall j\in\{0,\dots,k\} \, \forall \lambda\in\{0,\dots,n_j\}:\\
|X_{n_1+\dots+n_{j-1}+\lambda}|\geq j, X_{n_1+\dots+n_{j}}= x_j
\end{array}
\right]
\cdot \Prob_{x_k}\bigl[\forall n\geq 1: |X_n|>k\bigr]\\
&=& \Prob\left[
\begin{array}{c}
\forall j\in\{0,\dots,k\} \, \forall \lambda\in\{0,\dots,n_j\}:\\
|X_{n_1+\dots+n_{j-1}+\lambda}|\geq j, X_{n_1+\dots+n_{j}}= x_j
\end{array}
\right]
\cdot \xi([x_k]_3).
\end{eqnarray*}
Analogously,
\begin{eqnarray*}
&&\mathbb{P}\bigl[\mathbf{W}_0^{k+1}=x_0^{k+1},\mathbf{i}_0^{k+1}=n_0^{k+1}\bigr]\\
&=& \Prob\left[
\begin{array}{c}
\forall j\in\{0,\dots,k\} \, \forall \lambda\in\{0,\dots,n_j\}:\\
|X_{n_1+\dots+n_{j-1}+\lambda}|\geq j, X_{n_1+\dots+n_{j}}= x_j
\end{array}
\right]\\[1ex]
&&\ 
\cdot \Prob_{x_k}\bigl[\forall i\in\{1,\dots,n_{k+1}\}:|X_i|>k,X_{n_{k+1}}=x_{k+1}\bigr]
\cdot \xi([x_{k+1}]_3).
\end{eqnarray*}
Thus, under the assumption that
$\mathbb{P}\bigl[\mathbf{W}_0^k=x_0^k,\mathbf{i}_0^k=n_0^k\bigr]>0$ we obtain
\begin{eqnarray*}
&&\mathbb{P}\bigl[\mathbf{W}_0^{k+1}=x_0^{k+1},\mathbf{i}_0^{k+1}=n_0^{k+1}\, \bigl|\,
\mathbf{W}_0^k=x_0^k,\mathbf{i}_0^k=n_0^k\bigr] \\
&=& 
\frac{\xi([x_{k+1}]_3)}{\xi([x_{k}]_3)}\cdot 
\Prob_{x_k}\bigl[\forall i\in\{1,\dots,n_{k+1}\}:|X_i|>k,X_{n_{k+1}}=x_{k+1}\bigr].
\end{eqnarray*}
\end{proof}

Observe that $\Prob_{x_k}\bigl[\forall
i\in\{1,\dots,n_{k+1}\}:|X_i|>k,X_{n_{k+1}}=x_{k+1}\bigr]$ depends only on
$n_{k+1}$, $[x_k]_3$ and $[x_{k+1}]_3$. We use this observation to construct a new
Markov chain on the state space
$$
\mathcal{Z} := \bigl\lbrace
(abc,n)\in \overline{A}^3\times \N \, \bigl| \, \exists de\in A^2:
\Prob_{de}[X_n=abc, \forall m\in\{1,\dots,n\}:|X_m|>2]
\bigr\rbrace,
$$
where $\overline{A}^3 :=\{abc \in A^3 \mid \xi(abc)>0\}$
with the following transition probabilities:
$$
q\bigl( (abc,n),(a'b'c',n')\bigr) =
\frac{\xi(a'b'c')}{\xi(abc)}\cdot
\Prob_{abc}\bigl[ X_n=aa'b'c', \forall i\in\{1,\dots,n'\}: |X_i|\geq 4\bigr].
$$
Observe that 
$$
\mathbb{P}\bigl[\mathbf{W}_{k+1}=x_{k+1},\mathbf{i}_{k+1}=n_{k+1}\, \bigl|\,
\mathbf{W}_k=x_k,\mathbf{i}_k=n_k\bigr] = q\bigl(([x_k]_3,n_k),([x_{k+1}]_3,n_{k+1})\bigr)
$$
for $k\geq 3$ and that the transition probabilities do not depend on
$n_k$. This provides that also $\bigl( [\mathbf{W}_k]_3\bigr)_{k\geq 3}$ is a
Markov chain on $\overline{A}^3$ with transition probabilities
$$
\tilde{q}(abc,a'b'c') =
\sum_{n'\in\N} q\bigl( (abc,n_{abc}),(a'b'c',n')\bigr), 
$$
where the $n_{abc}$'s on the right hand side of the equation may be chosen
arbitrarily. Observe that $[\mathbf{W}_k]_3$ may only take a finite number of states,
since the alphabet $A$ is finite and $|[\mathbf{W}_k]_3| = 3$. At this point we
need the above made assumption of suffix-irreducibility; this provides that
$\bigl([\mathbf{W}_k]_3\bigr)_{k\geq 3}$ is irreducible and therefore has an invariant probability measure
$\nu$. 
\begin{Lemma}
Let be $abc\in A^3$ and $n\in\N$ and define
$$
\pi(abc,n):=\sum_{def \in \overline{A}^3} \nu(def)\, q\bigl((def,n_{def}),(abc,n)\bigr),
$$
where $n_{def}$ can be chosen arbitrarily. Then $\pi$ is the unique invariant probability
measure of $\bigl([\mathbf{W}_k]_3,\mathbf{i}_k\bigr)_{k\geq 3}$.
\end{Lemma}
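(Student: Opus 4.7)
The plan is to verify three things in order: (i) $\pi$ is a probability measure on $\mathcal{Z}$, (ii) $\pi$ is $q$-invariant, and (iii) any $q$-invariant probability measure must equal $\pi$. The crucial structural observation that makes all three steps work is that $q\bigl((abc,n),(a'b'c',n')\bigr)$ does not depend on $n$, as already noted immediately before Proposition~\ref{prop-e_k}'s transition-kernel construction; I will write $q\bigl((abc,\cdot),(a'b'c',n')\bigr)$ to stress this.

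For (i), summing over $(abc,n)\in\mathcal{Z}$ and using that $q((def,n_{def}),\cdot)$ is a stochastic kernel,
\[
\sum_{(abc,n)\in\mathcal{Z}}\pi(abc,n)=\sum_{def\in\overline{A}^3}\nu(def)\sum_{(abc,n)\in\mathcal{Z}}q\bigl((def,n_{def}),(abc,n)\bigr)=\sum_{def\in\overline{A}^3}\nu(def)=1.
\]
For (ii), I compute $\pi q$ at an arbitrary state $(a'b'c',n')$. Using independence of $q$ on the source-$n$ and then Fubini,
\[
(\pi q)(a'b'c',n')=\sum_{abc}q\bigl((abc,\cdot),(a'b'c',n')\bigr)\sum_{n\in\N}\sum_{def}\nu(def)\,q\bigl((def,n_{def}),(abc,n)\bigr).
\]
The inner double sum collapses, because $\sum_{n}q\bigl((def,n_{def}),(abc,n)\bigr)=\tilde{q}(def,abc)$ by the definition of $\tilde{q}$. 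Then, invoking invariance of $\nu$ under $\tilde{q}$, $\sum_{def}\nu(def)\tilde{q}(def,abc)=\nu(abc)$, and what remains is precisely $\pi(a'b'c',n')$.

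For (iii), suppose $\mu$ is any $q$-invariant probability measure on $\mathcal{Z}$, and consider its marginal $\bar{\mu}(abc):=\sum_{n}\mu(abc,n)$ on $\overline{A}^3$. Summing the invariance identity for $\mu$ over $n'$ and again using that $q$ ignores the source-$n$, one checks that $\bar{\mu}$ is $\tilde{q}$-invariant; by the assumed irreducibility of $([\mathbf{W}_k]_3)_{k\geq 3}$ (which follows from suffix-irreducibility) there is only one such measure, hence $\bar{\mu}=\nu$. Substituting back,
\[
\mu(a'b'c',n')=\sum_{abc}\bar{\mu}(abc)\,q\bigl((abc,\cdot),(a'b'c',n')\bigr)=\sum_{abc}\nu(abc)\,q\bigl((abc,n_{abc}),(a'b'c',n')\bigr)=\pi(a'b'c',n'),
\]
proving uniqueness.

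The only non-routine point is the uniqueness step, where one has to notice that marginalising the invariance equation for $\mu$ over $n'$ gives an invariance equation for $\bar{\mu}$ on $\overline{A}^3$; everything else is algebraic bookkeeping once the key fact that $q$ is insensitive to the source-$n$ coordinate has been used.
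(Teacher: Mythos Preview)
Your proof is correct and follows the same route as the paper's: the key step, verifying $q$-invariance of $\pi$, is exactly the computation the paper carries out, exploiting that $q\bigl((abc,n),\cdot\bigr)$ does not depend on $n$ and that $\nu$ is $\tilde q$-invariant. You are in fact more complete than the paper, which only writes out the invariance calculation and leaves the probability-measure property and uniqueness implicit; your parts (i) and (iii) supply these cleanly.
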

\begin{proof}
It is a straightforward computation to prove the lemma:
\begin{eqnarray*}
&& \sum_{(ghi,s)\in\mathcal{Z}} \pi(ghi,s)\ q\bigl((ghi,s),(abc,n)\bigr)\\
&=& \sum_{(ghi,s)\in\mathcal{Z}} \sum_{def\in \overline{A}^3} \nu(def)\
q\bigl((def,n_{def}),(ghi,s)\bigr) \ q\bigl((ghi,s),(abc,n)\bigr) \\
&=& \sum_{ghi\in \overline{A}^3} q\bigl((ghi,n_{ghi}),(abc,n)\bigr) \sum_{def\in \overline{A}^3}
\nu(def) \sum_{s\in\N} q\bigl((def,n_{def}),(ghi,s)\bigr)\\
&=& \sum_{ghi\in \overline{A}^3} q\bigl((ghi,n_{ghi}),(abc,n)\bigr)\ \nu(ghi) 
= \pi(abc,n).
\end{eqnarray*}
\end{proof}

Define $g:\mathcal{Z}\to\N: (abc,n) \mapsto n$. An application of the
\textit{ergodic theorem for positive recurrent Markov chains} yields
\begin{equation}\label{integral}
\frac{1}{k} \sum_{i=3}^k g\bigl([\mathbf{W}_k]_3,\mathbf{i}_k\bigr) = \frac{\mathbf{e}_k-
  \mathbf{e}_2}{k}\ \xrightarrow{k\to\infty}\ \int g(abc,n)\,d\pi \quad
\textrm{almost surely},
\end{equation}
if the integral exists. Our next aim is to ensure finiteness of this integal
and to compute a formula for it. For this purpose, we define
\begin{eqnarray*}
\overline{G}(ab,cd|z) &:=& \sum_{n=0}^\infty \Prob_{ab}\bigl[ X_n=cd,\forall
m\leq n:|X_m|\geq 2\bigr]\, z^n,\\
\mathcal{K}(ab,cde|z) & :=& \sum_{n=1}^\infty \Prob_{ab}\bigl[ X_n=cde, \forall
m \in\{1,\dots,n\}:|X_m|\geq 3\bigr]\, z^n\\
&=& \sum_{fg\in A^2} p(ab,cfg)\cdot z \cdot \overline{G}(fg,de|z),
\end{eqnarray*}
where $a,b,c,d,e\in A$ and $z>0$. 
We have the following linear system of equations:
\begin{eqnarray}\label{g-system}
\overline{G}(ab,cd|z) &=& \delta_{ab}(cd) + \sum_{c'd'\in A^2}
p(ab,c'd') \cdot z\cdot \overline{G}(c'd',cd|z) +\nonumber\\
&& \ + \sum_{c'd'e'\in A^3} p(ab,c'd'e')\cdot
z\cdot \sum_{f'\in A} H(d'e',f'|z)\cdot \overline{G}(c'f',cd|z).
\end{eqnarray}
Moreover, we also have the following finite system of equations:
\begin{eqnarray}\label{h-equations}
H(ab,c|z) &=& p(ab,c)\cdot z +  \sum_{de\in
  A^2} p(ab,de)\cdot z\cdot H(de,c|z)\nonumber\\
&& \ + \sum_{def\in A^3} p(ab,def)\cdot z \cdot \sum_{g\in A} H(ef,g|z)\cdot H(dg,c|z);
\end{eqnarray}
compare with Lalley \cite{lalley}. 
The system (\ref{h-equations}) consists of equations of quadratic order, and
thus the functions $H(\cdot,\cdot|z)$ are algebraic, if the transition
probabilities are algebraic. If one
has solved this system, then the linear system of equations (\ref{g-system})
can be solved easily. In particular, the functions
$\overline{G}(\cdot,\cdot|z)$ are also algebraic for algebraic transition probabilities. Observe that we can write
$$
\tilde q(abc,a'b'c') = \frac{\xi(a'b'c')}{\xi(abc)}\ \mathcal{K}(bc,a'b'c'|1),
$$
providing $\nu$ can be computed if (\ref{h-equations}) can be solved. Turning back to our integral in (\ref{integral}) we can
now compute:
\begin{Prop}\label{exit-th}
We have $\lim_{k\to\infty} \mathbf{e}_k/k=\Lambda$ almost surely, where
\begin{eqnarray*}
\Lambda:= 
\sum_{abc,def\in \overline{A}^3} \nu(def)\cdot \frac{\xi(abc)}{\xi(def)}\cdot 
\frac{\partial}{\partial z}\biggl[ \sum_{gh\in A^2} p(ef,agh)\cdot z \cdot \overline{G}(gh,bc|z)\biggr]\Biggl|_{z=1}.
\end{eqnarray*}
\end{Prop}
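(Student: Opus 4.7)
The plan is to begin from the Ces\`aro identity (\ref{integral}) that the ergodic theorem already hands us, and reduce Proposition \ref{exit-th} to an explicit evaluation of the integral $\int g\,d\pi=\int n\,d\pi$. Transience together with finiteness of $A$ ensures $\mathbf{e}_2<\infty$ almost surely, so $\mathbf{e}_k/k$ and $(\mathbf{e}_k-\mathbf{e}_2)/k$ share the same almost-sure limit, and the proposition is therefore equivalent to showing that the integral equals the announced $\Lambda$.

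To evaluate the integral I would unfold $\pi(abc,n)=\sum_{def}\nu(def)\,q\bigl((def,n_{def}),(abc,n)\bigr)$ from the preceding lemma and swap the order of summation to obtain
\begin{eqnarray*}
\int n\,d\pi
&=& \sum_{def\in\overline{A}^3}\nu(def)\sum_{abc\in\overline{A}^3}\frac{\xi(abc)}{\xi(def)}\sum_{n\in\N} n\cdot\Prob_{def}\bigl[X_n=d\cdot abc,\ \forall i\in\{1,\dots,n\}:|X_i|\geq 4\bigr],
\end{eqnarray*}
where I have used the definition of $q$ with $d$ as the initial letter of $def$ that must be retained when one letter is adjoined. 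Next I would invoke the same letter-shift observation that was used just before Proposition \ref{prop-e_k}: the constraint $|X_i|\geq 4$ for all $i\geq 1$ prevents the leading letter $d$ from ever entering the ``last two'' positions, so it is untouched throughout the trajectory. This gives a natural bijection with the shorter walk on the suffix, yielding
$$
\Prob_{def}\bigl[X_n=d\cdot abc,\ \forall i:|X_i|\geq 4\bigr]\;=\;\Prob_{ef}\bigl[X_n=abc,\ \forall i:|X_i|\geq 3\bigr],
$$
whose generating series in $z$ is $\mathcal{K}(ef,abc|z)$ by definition.

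Then I would use $\sum_n n\,a_n=\frac{d}{dz}\bigl[\sum_n a_n z^n\bigr]_{z=1}$ to rewrite the inner $n$-weighted sum as $\frac{\partial}{\partial z}\mathcal{K}(ef,abc|z)\big|_{z=1}$, and substitute the already-derived identity $\mathcal{K}(ef,abc|z)=\sum_{gh\in A^2}p(ef,agh)\cdot z\cdot\overline{G}(gh,bc|z)$. Reassembling and relabelling gives exactly the expression for $\Lambda$ stated in the proposition; the entire chain of equalities is algebraic once the probabilistic identities above are in hand.

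The main obstacle is the \emph{finiteness} of the integral, which the ergodic-theorem conclusion (\ref{integral}) presupposes and which the formula for $\Lambda$ tacitly claims: one needs each $\partial_z\overline{G}(gh,bc|z)\big|_{z=1}$ to be finite, i.e.\ the algebraic systems (\ref{g-system}) and (\ref{h-equations}) must admit solutions analytic in an open neighbourhood of $z=1$. Under transience and suffix-irreducibility this regularity can be extracted from a spectral-radius argument on the underlying chain, and it is really the only delicate point of the proof; once it is granted the remaining steps are purely bookkeeping.
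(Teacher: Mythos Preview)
Your proposal is correct and follows essentially the same route as the paper: unfold $\pi$ via the preceding lemma, expand $q$, shift off the frozen leading letter to recognise $\mathcal{K}(ef,abc|z)$, and differentiate at $z=1$. The only substantive difference is in the justification of finiteness: the paper does not argue it directly but cites Lalley's result that the Green functions of random walks on regular languages have radius of convergence strictly greater than $1$, which immediately gives analyticity of $H(\cdot,\cdot|z)$ and $\overline{G}(\cdot,\cdot|z)$ near $z=1$; your ``spectral-radius argument'' gestures at the same phenomenon but is less precise, so if you want a clean reference you should invoke Lalley there.
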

\begin{proof}
We compute straight-forward:
\begin{eqnarray*}
&& \int g(abc,n)\,d\pi\\
&=& \sum_{(abc,n)\in\mathcal{Z}} n\cdot \sum_{def\in \overline{A}^3} \nu(def)\cdot q\bigl(
(def,n_{def}),(abc,n)\bigr)\\
&=& \sum_{def\in \overline{A}^3} \nu(def) \sum_{(abc,n)\in\mathcal{Z}} n\cdot
\frac{\xi(abc)}{\xi(def)}\cdot \Prob_{def}\bigl[ X_n=dabc, \forall
m\in\{1,\dots,n\}:|X_m|\geq 4\bigr]\\
&=& \sum_{abc,def\in \overline{A}^3} \nu(def)\cdot \frac{\xi(abc)}{\xi(def)}\cdot 
\sum_{n\in\N} n\cdot \Prob_{def}\bigl[X_n=dabc, \forall
m\in\{1,\dots,n\}:|X_m|\geq 4\bigr]\\ 
&=& \sum_{abc,def\in \overline{A}^3} \nu(def)\cdot \frac{\xi(abc)}{\xi(def)}\cdot 
\frac{\partial}{\partial z}\bigl[\mathcal{K}(ef,abc|z) \bigr]\Bigl|_{z=1}.
\end{eqnarray*}
Finiteness of the integal is ensured if all functions $H(\cdot,\cdot |z)$ and
$\overline{G}(\cdot,\cdot|z)$ have radii of convergence bigger than $1$. But
this follows from Lalley \cite{lalley}: he proved that the Green functions of
random walks on regular languages have radii of convergence bigger \mbox{than $1$.}
\end{proof}

Now we can state an explicit formula for the rate of escape:
\begin{Th}
\label{reglang-roe}
There is some non-negative constant $\ell$ such that
$$
\ell = \lim_{n\to\infty} \frac{l(X_n)}{n} = \frac{\Delta}{\Lambda}>0 \quad
\textrm{almost surely,}
$$
where
$$
\Delta := \sum_{abc,def\in \overline{A}^3} \nu(def)\,l(a)\,\frac{\xi(abc)}{\xi(def)} \mathcal{K}(ef,abc|1).
$$
In particular, $\lim_{n\to\infty} |X_n|/n = 1/\Lambda$ almost surely.
\end{Th}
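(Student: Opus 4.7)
The plan is to combine the exit-time convergence $\mathbf{e}_k/k\to\Lambda$ from Proposition \ref{exit-th} with an ergodic analysis of $l(\mathbf{W}_k)/k$, and then pass from the subsequence $(\mathbf{e}_k)_{k\geq 3}$ to an arbitrary time $n$. The key geometric observation is that because only the last two letters of the current word may be modified, the letter at position $j$ of $X_n$ becomes \emph{frozen} as soon as $|X_m|\geq j+2$ for every $m\geq n$, i.e.\ once $n>\mathbf{e}_{j+1}$. In particular, the first letter of $[\mathbf{W}_i]_3$ sits at position $i-2$ of $\mathbf{W}_i$ and coincides with this frozen value, so telescoping the positions of $\mathbf{W}_k$ gives
$$
l(\mathbf{W}_k)=\sum_{i=3}^{k} l\bigl(\text{first letter of }[\mathbf{W}_i]_3\bigr)+ l\bigl([\mathbf{W}_k]_2\bigr),
$$
where the remainder $l([\mathbf{W}_k]_2)\leq 2\max_{a\in A}l(a)$ is bounded uniformly in $k$.

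Define $h:\overline{A}^3\times\N\to[0,\infty)$ by $h(abc,n):=l(a)$. By suffix-irreducibility the chain $\bigl([\mathbf{W}_k]_3,\mathbf{i}_k\bigr)_{k\geq 3}$ is irreducible, and it admits the probability measure $\pi$ as invariant distribution, hence is positive recurrent. Since $h$ is bounded, the ergodic theorem combined with the telescoping identity yields
$$
\frac{l(\mathbf{W}_k)}{k}\ \xrightarrow{k\to\infty}\ \int h\,d\pi\qquad\text{almost surely.}
$$
Marginalizing $\pi(abc,n)=\sum_{def\in\overline{A}^3}\nu(def)\,q\bigl((def,n_{def}),(abc,n)\bigr)$ over $n$ and invoking both $\tilde q(def,abc)=\tfrac{\xi(abc)}{\xi(def)}\mathcal{K}(ef,abc|1)$ and the $\tilde q$-invariance of $\nu$, one computes
$$
\int h\,d\pi=\sum_{abc\in\overline{A}^3}l(a)\,\nu(abc)=\sum_{abc,def\in\overline{A}^3}\nu(def)\,l(a)\,\frac{\xi(abc)}{\xi(def)}\,\mathcal{K}(ef,abc|1)=\Delta,
$$
so in view of Proposition \ref{exit-th} we obtain $l(\mathbf{W}_k)/\mathbf{e}_k\to\Delta/\Lambda$ almost surely.

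For an arbitrary $n$, let $k=k(n)$ be the unique index satisfying $\mathbf{e}_k\leq n<\mathbf{e}_{k+1}$. The freezing observation implies that $X_n$ and $\mathbf{W}_{k+1}$ agree on their first $k-1$ letters, while $|X_n|\leq |X_{\mathbf{e}_k}|+\mathbf{i}_{k+1}= k+\mathbf{i}_{k+1}$ since the length changes by at most one per step; therefore
$$
\bigl|l(X_n)-l(\mathbf{W}_{k+1})\bigr|\leq \bigl(\max_{a\in A}l(a)\bigr)\bigl(\mathbf{i}_{k+1}+3\bigr).
$$
From $\mathbf{e}_k/k\to\Lambda\in(0,\infty)$ one deduces $\mathbf{i}_{k+1}/k=\mathbf{e}_{k+1}/k-\mathbf{e}_k/k\to 0$ almost surely, hence $\mathbf{i}_{k+1}/n\to 0$ and $n/\mathbf{e}_k\to 1$; together with the previous paragraph this yields $l(X_n)/n\to\Delta/\Lambda$ a.s. Specializing to $l\equiv 1$ gives $\Delta=\sum_{abc}\nu(abc)=1$, whence $|X_n|/n\to 1/\Lambda$, and strict positivity of $\ell=\Delta/\Lambda$ in the general case follows once some letter of positive length lies in the support of $\nu$. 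The main obstacle is the freezing/telescoping identity in the first paragraph, which requires careful bookkeeping of which positions are still modifiable at each level; once this is in place, the ergodic theorem and the sandwich $\mathbf{e}_k\leq n<\mathbf{e}_{k+1}$ finish the proof routinely.
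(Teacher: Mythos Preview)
Your proof is correct and follows essentially the same route as the paper: apply the ergodic theorem to $h(abc,n)=l(a)$ on the chain $\bigl([\mathbf{W}_k]_3,\mathbf{i}_k\bigr)_{k\geq 3}$, identify the limit as $\Delta$, and combine with Proposition~\ref{exit-th} via the sandwich $\mathbf{e}_k\le n<\mathbf{e}_{k+1}$. If anything, your write-up is more explicit than the paper's on two points it leaves implicit or outsources to Nagnibeda--Woess: the freezing/telescoping identity explaining why $\tfrac{1}{k}\sum_{i=3}^k h\bigl([\mathbf{W}_i]_3,\mathbf{i}_i\bigr)$ differs from $l(\mathbf{W}_k)/k$ only by $O(1/k)$, and the passage from the subsequence $(\mathbf{e}_k)$ to arbitrary $n$. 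Your observation that $\int h\,d\pi=\sum_{abc\in\overline{A}^3}l(a)\,\nu(abc)$ is also a clean simplification the paper does not spell out. One cosmetic point: at $n=\mathbf{e}_k$ the word $X_n$ has length $k$, so position $k-1$ is still modifiable in the next step and may differ from that of $\mathbf{W}_{k+1}$; thus $X_n$ and $\mathbf{W}_{k+1}$ are only guaranteed to agree on the first $k-2$ letters, and the additive constant in your bound should be a touch larger than $3$. This does not affect the argument.
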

\begin{proof}
With $h:\mathcal{Z}\to\N$ defined by $h(abc,n):=l(a)$ we obtain
$$
\frac{1}{n}\sum_{k=3}^n h\bigl([\mathbf{W}_k]_3,\mathbf{i}_k\bigr)
\xrightarrow{n\to\infty} \int h\,d\pi = \lim_{m\to\infty} \frac{l(X_{\mathbf{e}_m})}{m}.
$$
Simple computations lead to the following formula for this limit:
$$
\Delta := \int h\,d\pi = \sum_{abc,def\in \overline{A}^3} \nu(def)\cdot l(a)\cdot \frac{\xi(abc)}{\xi(def)}\cdot \mathcal{K}(ef,abc|1).
$$
Defining \mbox{$\mathbf{k}(n):=\max\{k\in\N_0 \mid \mathbf{e}_k\leq n\}$} we
obtain analogously to Nagnibeda and Woess \cite[Proof of Theorem D]{woess2}
$$
\ell =  \lim_{n\to\infty} \frac{l(X_n)}{n} = \lim_{n\to\infty} \frac{l(X_{\mathbf{e}_{\mathbf{k}(n)}})}{\mathbf{k}(n)}\frac{\mathbf{k}(n)}{\mathbf{e}_{\mathbf{k}(n)}} 
= \frac{\Delta}{\Lambda}>0.
$$
\end{proof}

Observe that for algebraic transition probabilities the rate of escape is
obtained by solving the algebraic system of equations (\ref{h-equations}). This yields that the rate
of escape is also algebraic, if the transition probabilities are algebraic and
$l(\cdot)$ takes only algebraic values.

\section{Rate of Escape of Random Walks on Free Products by Amalgamation}
\label{amalgams}
In this section we compute three formulas for the rate of escape of random
walks on free products by amalgamation of finite groups. This class of
structures form special cases of regular languages.

\subsection{Free Products by Amalgamation}
Let be $2\leq r\in\N$. Consider finite groups
$\Gamma_1,\dots,\Gamma_r$ with identities $e_1,\dots,e_r$ and subgroups $H_1\subset
\Gamma_1$, $\dots,H_r\subset \Gamma_r$. We assume that $H_1,\dots,H_r$ are
isomorphic, that is, there is a finite group $H$ such that
there are isomorphisms
$\varphi_1:H\to H_1,\dots,\varphi_r:H\to H_r$. Thus, we identify in the
following each $H_i$ with $H$. To explain the concept of free products
by amalgamation, we give at first a simple example: consider $\Gamma_1=\Gamma_2=\Z/d\Z$, $d\in\N$ even,
and the subgroup $H=\Z/2\Z$. Let $\Gamma_1$ be generated by an element $a$, and
$\Gamma_2$ by an element $b$. 
The free product by amalgamation  $\Z/d\Z \ast_{\Z/2\Z}
\Z/d\Z$ consists then of all finite words over the alphabet
$\{a,b\}$, where we have the relations $a^{d/2}=b^{d/2}$ and
$a^d=b^d=\varepsilon$. That is, any two words which can be deduced from each
other with these relations represent the same element. The relation $a^{d/2}=b^{d/2}$ means that the subgroup $\Z/2\Z$ in both copies of
$\Z/d\Z$ are identified. E.g., for $d=4$ it is $a^3bab^2=ab^3a^3=aba$.
To help visualize the concept of free products
by amalgamation, we may also think of the Cayley graphs $X_i$ of $\Gamma_i$. We
connect the graphs $X_i$ by identifying the subgroups $H=H_i$; at each
non-trivial coset of $H$ in all graphs $X_i$ we attach copies of $X_j$, $j\neq i$,
where the coset is identified with $H$ of the copy of $X_j$. This construction
is then iterated.
\par
We explain below free products by amalgamation in more detail. The quotient $\Gamma_i / H$ consists of all sets of sets $yH=\{yh \mid h\in H\}$, where $y\in\Gamma_i$. We fix
representatives $x_{i,1}=e_i,x_{i,2},\dots,x_{i,n_i}$ for the elements of
$\Gamma_i / H$, that is, for each $y\in\Gamma_i$ there is a unique
$x_{i,k}$ with $y\in x_{i,k}H$. We write $\Gamma_i^\times :=\Gamma_i\setminus H$ and $R_i:=\{x_{i,2},\dots,x_{i,n_i}\}$
with $n_i=[\Gamma_i:H]$. For any element $x\in\bigcup_{i=1}^r \Gamma_i$ we set
$\tau(x):=i$, if $x\in\Gamma_i^\times$, and $\tau(x):=0$, if $x\in H$.
\par
The free product of $\Gamma_1,\dots,\Gamma_r$
by amalgamation with respect to $H$ is given by
$$
\Gamma := \Gamma_1\ast_H \Gamma_2 \ast_H \dots \ast_H \Gamma_r,
$$
which consists of all finite words of the form
\begin{eqnarray}\label{word-form}
x_1x_2\dots x_n h,
\end{eqnarray}
where $h\in H$, $n\in\mathbb{N}_0$ and $x_1,\dots,x_n\in \bigcup_{i=1}^r R_i$ such that $\tau(x_i)\neq
\tau(x_{i+1})$. In the following we will always use this representation of
words. Suppose we are given a function \mbox{$l:\bigcup_{i=1}^r R_i\to
  [0;\infty)$.} Then we extend $l$ to a length function on $\Gamma$ by setting
$l(x_1\dots x_nh):=\sum_{i=1}^r l(x_i)$. 
The \textit{natural word length} is defined to be $\Vert x_1\dots x_nh
\Vert:=n$. In particular, $l(h)=\Vert h\Vert=0$ for all $h\in H$. 
For two words \mbox{$w_1=x_1x_2\dots x_m h$,} $w_2=y_1y_2\dots y_nh' \in\Gamma$
a group operation is defined in the following way: first, concatenate the two
words, then replace $hy_1$ in the middle by $y_1'h_1$ such that $y_1'$ is a
representative for the class of $hy_1$. Iterate the last step with $h_1y_2$ and
so on. Finally, we get a word of the form $x_1\dots x_ny_1'\dots y_n'h_n$ with
$h_n\in H$, that is, we get the requested equivalent
form (\ref{word-form}) for the concatenated word $w_1\circ w_2$. Note also that
$w^{-1}=h^{-1}x_m^{-1}\dots x_1^{-1}$ is the inverse of $w_1$ and can be
written in the form of (\ref{word-form}). The empty word $e$ is the identity of
this group operation. Observe that each $\Gamma_i$ is a subset of $\Gamma$.
\par
Suppose we are given probability measures $\mu_i$ on
$\Gamma_i$. Let $\alpha_1,\dots,\alpha_r$ be strictly positive real numbers such that
$\sum_{i=1}^r \alpha_i=1$. A probability measure on $\Gamma$ is given by 
$$
\mu(x) :=
\begin{cases}
\alpha_{\tau(x)} \mu_{\tau(x)}(x), & \textrm{if } x\in
\bigcup_{i=1}^r\Gamma_i^\times\\
\sum_{i=1}^r \alpha_i \mu_i(x), & \textrm{if } x\in H\\
0, & \textrm{otherwise}
\end{cases}.
$$
The $n$-th convolution power of $\mu$ is denoted by $\mu^{(n)}$.
The random walk $(X_n)_{n\in\N_0}$ on $\Gamma$ is then governed by the transition
probabilities $p(w_1,w_2):=\mu(w_1^{-1}w_2)$, where $w_1,w_2\in\Gamma$. Initially, $X_0:=e$.

\begin{Lemma}
The random walk on $\Gamma$ is recurrent if and only if $r=2=[\Gamma_1:H]=[\Gamma_2:H]$.
\end{Lemma}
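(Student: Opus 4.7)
The plan is to combine the geometric structure of the Bass-Serre tree $T$ of the amalgamated product with the result of Malyshev cited above -- namely, $(X_n)_{n\in\N_0}$ is recurrent iff the rate of escape with respect to the natural word length $\Vert\cdot\Vert$ vanishes. Recall that $T$ has a vertex for each coset of each $\Gamma_i$ (and, when $r\geq 3$, additional vertices for cosets of $H$); a vertex of type $\Gamma_i$ has degree $[\Gamma_i:H]$, and any $H$-vertex has degree $r$. Consequently, $T$ is a bi-infinite line precisely when $r=2$ and $[\Gamma_1:H]=[\Gamma_2:H]=2$; otherwise $T$ contains (infinitely many) vertices of degree at least $3$. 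Throughout, I shall use the tacit irreducibility assumption that each $\mu_i$ assigns positive mass to $\Gamma_i^{\times}$.

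For the ``if'' direction, assume $r=2=[\Gamma_1:H]=[\Gamma_2:H]$. Writing $R_1=\{a\}$ and $R_2=\{b\}$, one has $a^2,b^2\in H$, so $\langle ab\rangle$ is infinite cyclic of index $2|H|$ in $\Gamma$, and $\Gamma$ is virtually $\Z$. The plan is then to track $\Vert X_n\Vert$ as a birth-and-death chain on $\N_0$: the last non-$H$ letter of $X_n$ lies in either $R_1$ or $R_2$, and its type is forced to alternate as $\Vert X_n\Vert$ increases. Because of this forced alternation, the ratios of up-step to down-step probabilities at consecutive levels multiply to the same constant at every pair of levels; Kolmogorov's criterion for birth-and-death chains then yields recurrence of $(\Vert X_n\Vert)$, and hence of $(X_n)$.

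For the ``only if'' direction, assume $r\geq 3$ or $\max_i[\Gamma_i:H]\geq 3$, so that $T$ has infinitely many vertices of degree $\geq 3$. By a first-passage argument at each branching vertex, combined with irreducibility, I would obtain a uniform lower bound $\rho>0$ for the probability that, once the walk enters a subtree rooted at a branching vertex, it never returns to the parent. Coupling $(\Vert X_n\Vert)$ with a biased random walk on $\N_0$ of mean drift at least $\rho$ then yields $\liminf_n\Vert X_n\Vert/n>0$ almost surely, whence Malyshev's theorem gives transience.

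The main obstacle will be the ``if'' direction: since the $\mu_i$ need not be symmetric, one cannot invoke Varopoulos's theorem on virtually $\Z$ groups. The subtle point is that $[\Gamma_i:H]=2$ makes each $\Gamma_i^\times$ act on the line $T$ by reflections, so any apparent bias of $\mu_1$ at one level is cancelled at the next level where $\mu_2$ dictates the step direction; this is the mechanism that keeps asymmetric nearest-neighbour random walks on the infinite dihedral group recurrent.
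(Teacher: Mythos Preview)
Your treatment of the ``if'' direction is correct and in fact spells out what the paper merely cites. The paper observes that index $2$ forces $H\trianglelefteq\Gamma_1,\Gamma_2$, passes to the quotient $(\Gamma_1/H)\ast(\Gamma_2/H)\cong(\Z/2\Z)\ast(\Z/2\Z)$, and then quotes as ``well-known'' that convex-combination walks on the infinite dihedral group are recurrent. Your birth-and-death argument is precisely the reason this is true: because $H$ is normal in each $\Gamma_i$, the increment probabilities of $\Vert X_n\Vert$ depend only on the type $\tau(x_n)$ of the last non-$H$ letter, this type is forced to alternate with $\Vert X_n\Vert$, and the consecutive ratios multiply to $1$ over any two levels, so Kolmogorov's criterion gives recurrence. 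One small point worth making explicit: $\Vert X_n\Vert$ alone is not Markov, but the pair $(\Vert X_n\Vert,\tau(x_n))$ is, and the second coordinate is determined by the first modulo $2$ once the walk has left $H$; your argument goes through with this adjustment.

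Your ``only if'' direction, however, has a genuine gap. The uniform lower bound $\rho>0$ for the probability of never returning to the parent after entering a subtree is exactly the assertion of transience; absent an independent mechanism to establish it, the argument is circular. The branching of $T$ does yield such a bound for \emph{simple} random walk on $T$, but the process induced on $T$ by $(X_n)$ is neither Markov nor nearest-neighbour in a controlled way, and the one-step drift of $\Vert X_n\Vert$ depends on the full suffix $x_nh$ and can be negative at individual states. So neither the ``first-passage argument at each branching vertex'' nor the coupling with a biased walk on $\N_0$ is available from what you have written. (Invoking Malyshev here is also backwards: you would be deducing transience from positive speed, which is the harder statement.) The paper sidesteps all of this with a structural fact: when $r\ge 3$ or some $[\Gamma_i:H]\ge 3$, the amalgam $\Gamma$ is non-amenable (it contains a non-abelian free subgroup), and any random walk on a non-amenable group governed by a probability measure on the group is transient. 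This is a two-line argument once one has the right references and requires no quantitative escape estimate.
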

\begin{proof}
Assume $r=2=[\Gamma_1:H]=[\Gamma_2:H]$. This provides $H\unlhd
\Gamma_1,\Gamma_2$, that is, $(\Gamma_1 \ast \Gamma_2)/ H \simeq (\Gamma_1 /
H)\ast (\Gamma_2/H)$  and $\Gamma_1 / H \simeq \mathbb{Z}/ 2\mathbb{Z} \simeq 
\Gamma_2 / H$. Since it is well-known that each random walk on the free product $(\mathbb{Z}/2\mathbb{Z}) \ast   (\mathbb{Z}/ 2\mathbb{Z})$, which arises from
a convex combination of probability measures on the single factors, is
recurrent, the random walk on $\Gamma$ also must be recurrent.
\par
Assume now that $r=2=[\Gamma_1:H]=[\Gamma_2:H]$ does not hold. Then either
$r\geq 3$ or w.l.o.g. $[\Gamma_1:H]\geq 3$. In both cases, $\Gamma$ is
non-amenable (for further details see e.g. Woess \cite[Th.10.10]{woess}). With Woess
\cite[Cor.12.5]{woess} we get that the random walk on $\Gamma$ must be transient.
\end{proof}

From now on we exclude the case $r=2=[\Gamma_1:H_1]=[\Gamma_2:H_2]$. 
In the following three subsections we want to compute three explicit formulas for the
rate of escape of our random walk on $\Gamma$. The first approach uses the
technique from the previous section, while the second approach arises from an
application of a theorem of Sawyer and Steger \cite{sawyer}. The third
technique uses the group
structure of $\Gamma$, but is restricted to the computation of the rate of
escape w.r.t. the natural word length.

\subsection{Exit Time Technique}
\label{exit-time}

We use the technique developped in Section \ref{rate-of-escape} to compute
$\ell$. Notice that $\Gamma$ is a special case of a regular language and our
random walk on $\Gamma$ fulfills the assumptions of our investigated random
walks on regular languages: starting from a word $x_1\dots x_nh\in \Gamma$ we
can only move in one step with positive probability to a word of the form
\begin{itemize}
\item $x_1\dots x_{n-1}x_n'h'$ with $x_n'h'\in \Gamma_{\tau(x_n)}^\times$,
  namely with probability $\mu(h^{-1}x_n^{-1}x_n'h')$, or
\item $x_1\dots x_{n}x_{n+1}h'$ with $x_{n+1}h'\in \bigcup_{i=1,i\neq
    \tau(x_n)}^r\Gamma_i^\times$, namely with probability \mbox{$\mu(h^{-1}x_{n+1}h')$,} or
\item $x_1\dots x_{n-1}h'$ with $h'\in H$, namely with probability $\mu(h^{-1}x_n^{-1}h')$,
\end{itemize}
where $x_1,\dots,x_{n+1},x_n'\in \bigcup_{i=1}^r R_i$ and $h,h'\in H$.
\par
We may now apply the technique of Section \ref{rate-of-escape} with some
slight modifications and simplifications. The exit-times are now given by
$$
\mathbf{e}_k:=\sup \bigl\lbrace m\in \N_0 \,\bigl|\, \Vert X_m\Vert=k \bigr\rbrace.
$$
Analogously, $\mathbf{W}_k:=X_{\mathbf{e}_k}$ and
$\mathbf{i}_k:=\mathbf{e}_k-\mathbf{e}_{k-1}$. We define for any $x,y\in R_i$, $i\in\{1,\dots,r\}$, $h,h'\in H$,
\begin{eqnarray*}
H(xh,h'|z) & := & \sum_{n=1}^\infty \Prob_{xh}\bigl[X_n=h',\forall m<n:
\Vert X_m\Vert \geq 1\bigr]\, z^n,\\
\xi(i) & :=& \sum_{gh_1\in \bigcup_{j=1,j\neq i}^r \Gamma_j^\times} \mu(gh_1)\cdot
\Bigl(1- \sum_{h_2\in H} H(gh_1,h_2|1)\Bigr)>0,\\
\overline{G}(xh,yh'|z) &:=& \sum_{n=0}^\infty \Prob_{xh}\bigl[ X_n=yh',\forall
m \leq n:\Vert X_m\Vert\geq 1\bigr]\, z^n.
\end{eqnarray*}
The functions $H(xh,h'|z)$ and $\overline{G}(xh,yh'|z)$ can be computed by
solving a finite system of non-linear equations; compare with (\ref{g-system}) and (\ref{h-equations}).
Analogously to Proposition \ref{prop-e_k}, it is easy to see that
$\bigl(\mathbf{W}_k,\mathbf{i}_k\bigr)_{k\in\N}$ is a Markov chain. The state
space $\calZ$ can now be restricted to
$$
\calZ_\Gamma := \biggl\lbrace (xh,n) \, \biggl|\, x \in \bigcup_{i=1}^r R_i, h\in H, n\in\N\biggr\rbrace.
$$
Define $[x_1\dots x_nh]:=x_nh$. Then $([\mathbf{W}_k])_{k\in\N}$ is also a
irreducible Markov chain on a finite state space with invariant probability
measure $\nu$. Thus, we get
$$
\Lambda = \sum_{\substack{xh,yh'\in\bigcup_{i=1}^r \Gamma_i^\times,\\ \tau(xh)\neq \tau(yh')}} \nu(yh')\cdot \frac{\xi\bigl(\tau(xh)\bigr)}{\xi\bigl(\tau(yh')\bigr)}\cdot 
\frac{\partial}{\partial z}\biggl[ \sum_{w\in
\Gamma_{\tau(x)}^\times} p(yh',yh'w)\cdot z \cdot \overline{G}(h'w,xh|z)\biggr]\Biggl|_{z=1}
$$
and
$$
\Delta = \sum_{\substack{xh,yh'\in\bigcup_{i=1}^r \Gamma_i^\times,\\ \tau(xh)\neq
    \tau(yh')}} \nu(yh')\cdot
\frac{\xi\bigl(\tau(xh)\bigr)}{\xi\bigl(\tau(yh')\bigr)}\cdot l(x) 
\cdot \sum_{w\in \Gamma_{\tau(x)}^\times} p(yh',yh'w) \cdot \overline{G}(h'w,xh|1).
$$
Finally, we obtain:
\begin{Cor}\label{exittime-formula}
$$
\lim_{n\to\infty}\frac{l(X_n)}{n}=\frac{\Delta}{\Lambda} \quad \textrm{almost surely.}
$$
\end{Cor}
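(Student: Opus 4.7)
The plan is to verify that the random walk on $\Gamma$ falls within the framework of Theorem \ref{reglang-roe}, and then transcribe the argument of Section \ref{rate-of-escape} while taking advantage of the simplifications that the group structure of $\Gamma$ affords.

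First I would check that the random walk on $\Gamma$ really is a random walk on a regular language in the sense of Section \ref{reglanguages}: the alphabet is $\bigcup_{i=1}^r R_i \cup H$, and the three bullet points preceding the Corollary show that transitions from $x_1\dots x_n h$ only modify the last letter, adjoin a new representative, or delete the final $x_n$; the admissibility rule $\tau(x_i)\neq \tau(x_{i+1})$ plays the role of the regular-language constraint. Transience was established in the preceding Lemma (since $r=2=[\Gamma_1{:}H]=[\Gamma_2{:}H]$ is excluded), and suffix-irreducibility follows from the assumption that $\mu$ is a convex combination of probability measures on each $\Gamma_i$, so that every admissible two-letter suffix is reachable with positive probability via the group operation.

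Next, the crucial simplification is that, by the left-invariance of the transition kernel $p(w_1,w_2)=\mu(w_1^{-1}w_2)$, the probability $\xi([x]_3)$ of never returning below the current length depends on the word $x_1\dots x_n h$ only through $\tau(x_n)$, and the one-step transition law from such a word depends only on the last letter $x_n h$. Hence in the construction of Section \ref{rate-of-escape} the role of the three-letter suffix $[x]_3$ is taken over by the single letter $x_n h$, and the state space shrinks from $\mathcal{Z}$ to $\mathcal{Z}_\Gamma$. The analog of Proposition \ref{prop-e_k} then gives that $(\mathbf{W}_k,\mathbf{i}_k)_{k\in\N}$ is a Markov chain whose projection $([\mathbf{W}_k])_{k\in\N}$ is irreducible on the finite set $\bigcup_{i=1}^r \Gamma_i^\times$ and therefore admits a unique invariant probability measure $\nu$.

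With $\nu$ in hand, I would apply the ergodic theorem exactly as in (\ref{integral}) and Proposition \ref{exit-th}: taking $g(xh,n)=n$ yields $\mathbf{e}_k/k \to \Lambda$ almost surely, where the partial derivative of $\mathcal{K}$ at $z=1$ specializes to the expression for $\Lambda$ stated before the Corollary (the double sum being restricted to $\tau(xh)\neq \tau(yh')$ to reflect the admissibility rule). Taking $h(xh,n)=l(x)$ then gives $l(X_{\mathbf{e}_k})/k \to \Delta$ almost surely with $\Delta$ in the stated form. Finiteness of both integrals reduces, as in Proposition \ref{exit-th}, to the radii of convergence of $H(\cdot,\cdot|z)$ and $\overline{G}(\cdot,\cdot|z)$ being strictly greater than $1$, which holds by Lalley's result cited there. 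Finally, interpolating between successive exit times via $\mathbf{k}(n):=\max\{k:\mathbf{e}_k\leq n\}$ exactly as in the proof of Theorem \ref{reglang-roe} yields $\ell=\Delta/\Lambda$ almost surely.

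The bulk of the work is essentially bookkeeping, and the only genuine point to check carefully is the verification of suffix-irreducibility from the non-degeneracy of the measures $\mu_i$ and the exclusion of the recurrent case; once that is in place, the argument of Section \ref{rate-of-escape} transfers almost verbatim, with the three-letter suffixes replaced by single letters $xh$ with $x\in\bigcup_i R_i$, $h\in H$.
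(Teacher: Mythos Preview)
Your proposal is correct and follows essentially the same approach as the paper: both recognize the amalgamated free product as a special case of the regular-language framework, exploit the left-invariance of the kernel to collapse the three-letter suffix data $[x]_3$ down to the single last letter $x_nh$ (with $\xi$ depending only on $\tau(x_n)$), and then rerun the exit-time argument of Section~\ref{rate-of-escape} verbatim on the reduced state space $\mathcal{Z}_\Gamma$. Your flagging of suffix-irreducibility as the one point requiring genuine verification is apt; the paper simply asserts irreducibility of $([\mathbf{W}_k])_{k\in\N}$ without further comment, so you are being slightly more careful here than the original.
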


\subsection{Computation by Double Generating Functions}

\label{dgf}
In this section we derive another formula for the rate of escape with the help
of a theorem of Sawyer and Steger \cite[Theorem 2.2]{sawyer}, which we
reformulate adapted to our situation:
\begin{Th}[Sawyer and Steger]
\label{sawyer}
Suppose we can write for some $\delta>0$
$$
\mathcal{E}(w,z):=\mathbb{E}\biggl( \sum_{n\geq 0} w^{l(X_n)}\,z^{n}\biggl) = \frac{C(w,z)}{g(w,z)}
\quad \textrm{ for } w,z\in (1-\delta;1),
$$
where $C(w,z)$ and $g(w,z)$ are analytic for $|w-1|,|z-1|<\delta$
and \mbox{$C(1,1)\neq 0$}. 
Then
$$
\frac{l(X_n)}{n} \xrightarrow{n\to\infty} \ell=\frac{\frac{\partial}{\partial w}g(1,1)}{\frac{\partial}{\partial z}g(1,1)} \quad
\textrm{ almost surely.}
$$
Moreover, if $(X_n)_{n\in\N_0}$ is a reversible Markov chain, then with $\bar g(r,s):=g(e^{-r},e^{-s})$
$$
\frac{Y_n-n \ell}{\sqrt{n}} \xrightarrow{n\to\infty} N(0,\sigma^2)\
\textrm{ in law, where }\
\sigma^2 = \frac{-\frac{\partial^2}{\partial^2 r}\bar g(0,0)+2\ell
   \frac{\partial^2}{\partial s \partial r} \bar g(0,0) -\ell^2 \frac{\partial^2}{\partial^2 s}\bar g(0,0)}{\frac{\partial}{\partial s}\bar g(0,0)}.
$$
\end{Th}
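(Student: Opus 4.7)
The plan is to extract both assertions from a singularity analysis of the rational-looking representation $\mathcal{E}(w,z)=C(w,z)/g(w,z)$. Since $\mathcal{E}(1,z)=\sum_{n\geq 0}z^n=1/(1-z)$ has a simple pole at $z=1$ and $C(1,1)\neq 0$, one deduces that $g(1,1)=0$ and $\partial_z g(1,1)\neq 0$; matching the residue at $z=1$ pins down the identity $C(1,1)=-\partial_z g(1,1)$. Differentiating $\mathcal{E}$ in $w$ at $w=1$ produces $\sum_{n\geq 0}\mathbb{E}(l(X_n))\,z^n$, whose decomposition $\partial_w C(1,z)/g(1,z) - C(1,z)\partial_w g(1,z)/g(1,z)^2$ has dominant pole of order two at $z=1$ with leading coefficient $-C(1,1)\,\partial_w g(1,1)/[\partial_z g(1,1)]^2$. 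A standard Tauberian extraction (coefficients of $1/(1-z)^2$ grow linearly in $n$) together with the identification of $C(1,1)$ then yields $\mathbb{E}(l(X_n))/n \to \partial_w g(1,1)/\partial_z g(1,1)=:\ell$.

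The main obstacle is upgrading convergence in mean to almost sure convergence, since in the generality treated by Sawyer and Steger no group-theoretic subadditivity is available. I would compute $\partial_w^2 \mathcal{E}(1,z)$ and identify its leading singularity at $z=1$ of order $(1-z)^{-3}$; the same Tauberian argument then gives $\mathbb{E}(l(X_n)^2)=\ell^2 n^2+O(n)$, hence $\mathrm{Var}(l(X_n))=O(n)$. Chebyshev along the subsequence $n_k=k^2$, Borel--Cantelli, and the deterministic one-step bound $|l(X_{n+1})-l(X_n)|\leq L:=\max_{a\in A}l(a)$ then close the gap between consecutive $n_k$ to give $l(X_n)/n\to\ell$ almost surely.

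For the central limit theorem under reversibility, the plan is to parametrize the singular curve of $\bar g$ as $s=S(r)$ near the origin via the implicit function theorem; $S'(0)$ recovers $\pm\ell$ and $S''(0)$ encodes the variance. Reversibility is used to ensure that $l(X_n)$ decomposes into weakly dependent increments so that a genuine Gaussian limit (rather than a degenerate or non-Gaussian law) arises. Concretely, I would compute the bivariate Laplace transform $\mathcal{E}(e^{-r},e^{-s})=\bar C(r,s)/\bar g(r,s)$, perform a Cauchy/saddle-point inversion on a contour passing close to the singular curve, and read off the Gaussian density from the second-order Taylor expansion of $\bar g$ at $(0,0)$. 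Differentiating $\bar g(r,S(r))=0$ twice and solving for $S''(0)$ yields precisely the stated rational expression for $\sigma^2$ in terms of the partial derivatives of $\bar g$ at the origin. The delicate step is the uniform error control in the saddle-point approximation along the shifted contour, for which analyticity of $\bar g$ on a full polydisc around the origin is essential.
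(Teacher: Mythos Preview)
The paper does not prove this theorem. It is quoted verbatim as a reformulation of \cite[Theorem 2.2]{sawyer} (Sawyer and Steger), attributed to them in the theorem header, and used as a black box to derive Corollary~\ref{dgf-formula}. There is therefore no ``paper's own proof'' to compare your proposal against.

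That said, your sketch for the first assertion is sound in the present setting and is close in spirit to how such results are actually established. The chain of deductions $g(1,1)=0$, $C(1,1)=-\partial_z g(1,1)$, the double pole of $\partial_w\mathcal{E}(1,z)$ giving $\mathbb{E}\,l(X_n)\sim \ell n$, and the triple pole of $\partial_w^2\mathcal{E}(1,z)$ giving $\mathbb{E}\,l(X_n)^2=\ell^2 n^2+O(n)$ are all correct computations; the Chebyshev/Borel--Cantelli step along $n_k=k^2$ together with the bounded increment $|l(X_{n+1})-l(X_n)|\leq \max_x l(x)$ (which does hold here, since one step of the walk on $\Gamma$ alters at most one letter of the normal form) legitimately upgrades this to almost sure convergence. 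One small caveat: you need analyticity of $C$ and $g$ in a full neighbourhood of $(1,1)$, not just on $(1-\delta,1)^2$, to justify the Tauberian extraction cleanly; this is exactly what the hypothesis provides.

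Your CLT sketch is much less complete. The implicit parametrisation $s=S(r)$ of the zero set of $\bar g$ and the identification of $S''(0)$ with the stated $\sigma^2$ are formally correct, but the passage from ``bivariate generating function with analytic singular curve'' to ``Gaussian fluctuations'' is the entire content of the theorem and cannot be dispatched by a one-line appeal to saddle-point inversion. In Sawyer and Steger's original argument the reversibility hypothesis enters through spectral considerations specific to their setting, not merely to guarantee non-degeneracy; your proposal does not identify where reversibility is genuinely used. If you intend to supply an independent proof, this is the part that needs substantial work.
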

We remark that \cite[Theorem 2.2]{sawyer} also comprises a central limit theorem.
Similar limit theorems are well-known in analytical
combinatorics, see e.g. Bender and
Richmond \cite{bender-richmond} and Drmota \cite{drmota94}, \cite{drmota97}.
We show now how to write the expectation in the theorem in the required way.
Let $s_H$ be the stopping time of the first return to $H$ after start at
$e$, that is, $s_H=\inf\{1\leq m \in\N \mid
X_m\in H\}$. 
For $h\in H$, $i\in\{1,\dots,r\}$, $x\in\Gamma_i\setminus H$  and $z\in\mathbb{C}$ we define
$$
L(h,x|z)  :=  \sum_{n\geq 0} \Prob_h\bigl[X_n=x,s_H>n\bigr]\,z^n 
=\sum_{y\in \Gamma_i^\times} p(h,y)\cdot z\cdot \overline{G}(y,x|z).
$$
Additionally, we set $L(h,h|z):=1$ and $L(h,h'|z):=0$ for $h'\in H\setminus\{h\}$.
With this notation we have
$$
\mathcal{E}(w,z)  =  \sum_{x\in \Gamma} \sum_{n\in\N_0}
p^{(n)}(e,x)\,z^n\,w^{l(x)} 
= \sum_{x\in \Gamma}\sum_{h\in H} G(e,h|z)\, L(h,x|z)  \,w^{l(x)}.
$$
Setting
\begin{eqnarray*}
\mathcal{L}_i^+(w,z)  & := &  \sum_{x\in \Gamma_i^\times} L(e,x|z)\,w^{l(x)}
\quad \textrm{ and}\\ 
\mathcal{L}_i(w,z)  & := &  \sum_{n\geq 1} \sum_{\substack{x_1\dots x_nh\in \Gamma,\\
    x_1\in \Gamma_i^\times}}  L(e,x_1\dots x_nh|z)\,w^{l(x_1\dots x_nh)},
\end{eqnarray*}
we have
\begin{equation}\label{l-formula}
\mathcal{L}(w,z) := \sum_{x\in \Gamma} L(e,x|z)\,w^{l(x)} = 1 +  \sum_{i=1}^r \mathcal{L}_i(w,z).
\end{equation}
We now rewrite $\mathcal{L}_i(w,z)$:
\begin{eqnarray}
\mathcal{L}_i(w,z)& = &\mathcal{L}_i^+(w,z) \cdot \Bigl( 1 + \sum_{n\geq 2}
\sum_{\substack{x_2\dots x_nh\in\Gamma \setminus H,\\ x_2\notin \Gamma_1}}
L(e,x_2\dots x_nh|z)\,w^{l(x_2\dots x_nh)}\Bigr)\nonumber\\
& = & \mathcal{L}_i^+(w,z) \cdot \Bigl( 1 + \sum_{j=1, j\neq i}^r
\mathcal{L}_j(w,z)\Bigr) 
 =  \mathcal{L}_i^+(w,z) \cdot \Bigl( \mathcal{L}(w,z) -
 \mathcal{L}_i(w,z)\Bigr) \label{li-formula}.
\end{eqnarray}
From (\ref{l-formula}) and (\ref{li-formula}) we obtain
$$
\mathcal{L}(w,z) = 1  + \sum_{i=1}^r \frac{\mathcal{L}_i^+(w,z)\mathcal{L}(w,z) }{1+\mathcal{L}_i^+(w,z)},
$$
yielding
$$
\mathcal{L}(w,z) = \frac{1}{1-\sum_{i=1}^r \frac{\mathcal{L}_i^+(w,z)}{1+\mathcal{L}_i^+(w,z)}}.
$$
Now we can write the expectation of Theorem \ref{sawyer} in the requested way:
\begin{eqnarray*}
\mathcal{E}(w,z)& =& \sum_{h\in H} G(e,h|z) \sum_{x\in\Gamma} L(e,h^{-1}x|z)\,
w^{l(x)}\\
&=& \sum_{h\in H} G(e,h|z) \sum_{x\in\Gamma} L(e,x|z)\,w^{l(x)}
= \frac{\sum_{h\in H} G(e,h|z)}{1-\sum_{i=1}^r \frac{\mathcal{L}_i^+(w,z)}{1+\mathcal{L}_i^+(w,z)}}.
\end{eqnarray*}
Thus, we can apply Theorem \ref{sawyer} with $C(w,z)=\sum_{h\in H} G(e,h|z)$ and
$$
g(w,z) = 1-\sum_{i=1}^r \frac{\mathcal{L}_i^+(w,z)}{1+\mathcal{L}_i^+(w,z)}.
$$
\begin{Cor}\label{dgf-formula}
The rate of escape w.r.t. $l(\cdot)$ is
$$
\lim_{n\to\infty} \frac{l(X_n)}{n}  = \frac{\Upsilon_1}{\Upsilon_2} \textrm{
  almost surely,} 
$$
where
$$
\Upsilon_1 = \sum_{i=1}^r \frac{\sum_{x\in\Gamma_i^\times}
  l(x)\,L(e,x|1)}{\bigl(1+\sum_{x\in\Gamma_i^\times} L(e,x|1)\bigr)^2} \ 
\textrm{ and } \
\Upsilon_2 = \sum_{i=1}^r \frac{\sum_{x\in\Gamma_i^\times}
  L'(e,x|1)}{\bigl(1+\sum_{x\in\Gamma_i^\times} L(e,x|1)\bigr)^2}.
$$
\end{Cor}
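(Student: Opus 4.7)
The plan is to apply Theorem \ref{sawyer} directly to the factorization $\mathcal{E}(w,z)=C(w,z)/g(w,z)$ already derived in the text, with $C(w,z)=\sum_{h\in H}G(e,h|z)$ and $g(w,z)=1-\sum_{i=1}^r \mathcal{L}_i^+(w,z)/\bigl(1+\mathcal{L}_i^+(w,z)\bigr)$. Once the hypotheses of Theorem \ref{sawyer} are checked, the formula $\ell=\Upsilon_1/\Upsilon_2$ drops out of a one-line differentiation.

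First I would verify analyticity. For $z$ near $1$ analyticity of $G(e,h|z)$ is guaranteed by Lalley \cite{lalley}, since the Green functions of random walks on regular languages (of which $\Gamma$ is a special case) have radii of convergence strictly greater than $1$; this was already used in the proof of Proposition \ref{exit-th}. Each $L(e,x|z)$, being a finite linear combination (over $y\in\Gamma_{\tau(x)}^\times$) of $p(e,y)\,z\,\overline G(y,x|z)$, inherits this property. Consequently $\mathcal{L}_i^+(w,z)=\sum_{x\in\Gamma_i^\times}L(e,x|z)\,w^{l(x)}$ is analytic in a bidisk $|w-1|,|z-1|<\delta$, and the denominators $1+\mathcal{L}_i^+(w,z)$ are bounded away from zero there (they are real and $\geq 1$ at $(1,1)$). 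Finally, $C(1,1)=\sum_{h\in H}G(e,h|1)\geq G(e,e|1)\geq 1\neq 0$, so Theorem \ref{sawyer} applies.

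Next I would differentiate. The quotient rule gives the pleasant simplification
\begin{equation*}
\frac{\partial}{\partial w}\frac{\mathcal{L}_i^+}{1+\mathcal{L}_i^+}=\frac{\partial_w\mathcal{L}_i^+}{(1+\mathcal{L}_i^+)^2},\qquad \frac{\partial}{\partial z}\frac{\mathcal{L}_i^+}{1+\mathcal{L}_i^+}=\frac{\partial_z\mathcal{L}_i^+}{(1+\mathcal{L}_i^+)^2},
\end{equation*}
so that $\partial_w g(w,z)=-\sum_i \partial_w\mathcal{L}_i^+/(1+\mathcal{L}_i^+)^2$ and similarly for $\partial_z g$. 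Evaluating at $(w,z)=(1,1)$ using $\partial_w\mathcal{L}_i^+(1,1)=\sum_{x\in\Gamma_i^\times}l(x)L(e,x|1)$ and $\partial_z\mathcal{L}_i^+(1,1)=\sum_{x\in\Gamma_i^\times}L'(e,x|1)$, and noting $1+\mathcal{L}_i^+(1,1)=1+\sum_{x\in\Gamma_i^\times}L(e,x|1)$, the two minus signs cancel in the ratio $\partial_w g(1,1)/\partial_z g(1,1)$ and leave exactly $\Upsilon_1/\Upsilon_2$, as claimed.

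The principal obstacle is the analyticity verification at the boundary point $(1,1)$: one must know that the radii of convergence of the $L(e,x|z)$ (equivalently, of the relevant Green/first-passage functions) exceed $1$ so that $\mathcal{L}_i^+(w,z)$ extends past $z=1$. This rests on Lalley's result as invoked earlier, and is the only genuinely non-routine input. The positivity $\Upsilon_2>0$ needed for the ratio to be well-defined follows from transience, which has been assumed throughout the section.
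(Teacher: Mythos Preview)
Your proposal is correct and follows the same approach as the paper: the paper's proof is the single sentence ``Computing the derivatives of $g(w,z)$ w.r.t.\ $w$ and $z$ leads to the proposed formula,'' and you have simply carried out that computation explicitly, together with a careful verification of the analyticity hypotheses of Theorem~\ref{sawyer} that the paper leaves implicit.
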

\begin{proof}
Computing the derivatives of $g(w,z)$ w.r.t. $w$ and $z$ leads to the proposed formula.
\end{proof}

\subsection{Computation via the Limit Process}

\label{limitprocesses}
In this section we derive another formula for the rate of escape w.r.t. the
natural word length $\Vert \cdot \Vert$. First,
$$
\mathbb{E}[\Vert X_{n}\Vert] = \sum_{\bar g\in \Gamma} \Vert \bar g\Vert\,
\mu^{(n)}(\bar g)\quad \textrm{
  and }\quad
\mathbb{E}[\Vert X_{n+1}\Vert] = \sum_{g,\bar g\in \Gamma} \Vert g\bar g\Vert\,
\mu(g)\, \mu^{(n)}(\bar g).
$$
Thus, we have
$$
\mathbb{E}[\Vert X_{n+1}\Vert] -\mathbb{E}[\Vert X_{n}\Vert]  =   \sum_{g\in
  \Gamma} \mu(g) \int_\Gamma \bigl( \Vert gX_n\Vert
-\Vert X_n\Vert\bigr)\, d\mu^{(n)}.
$$
Since $\mathbb{E}[\Vert X_n\Vert]/n$ converges to $\ell=\lim_{n\to\infty} \Vert
X_n\Vert/n$, it is sufficient to prove that this difference of expectations converges; the limit must
then equal $\ell$.
The process $(X_n)_{n\in\N_0}$ converges to some random element $X_\infty$
valued in
$$
\Gamma_\infty := \biggl\lbrace
x_1x_2\dots \in\Gamma^\N \,\biggl|\, x_i\in \bigcup_{j=1}^r R_j, \tau(x_i)\neq \tau(x_{i+1})
\biggr\rbrace
$$
in the sense that the length of the common prefix of $X_n$ and $X_\infty$ goes to
infinity. We denote by $X_\infty^{(1)}$ the first letter of $X_\infty$ and for
$g\in \bigcup_{i=1}^r \Gamma_i$ we define
$$
Y_{g} := \lim_{n\to\infty} \Vert gX_n\Vert - \Vert X_n \Vert 
= 
\begin{cases}
1,& \textrm{if } X_{\infty}^{(1)}\notin \Gamma_{\tau(g)}\\
-1, & \textrm{if } X_{\infty}^{(1)}\in g^{-1}H \\
0, & \textrm{otherwise}
\end{cases}
$$
At this point we need the equation $\Vert hx\Vert=\Vert x\Vert$ for $h\in H$
and $x\in\Gamma$. This equation is, in general, not satisfied for other length functions.
The Green functions $G(x,y)=\sum_{n\geq 0} \Prob_x[X_n=y]$, where
$x,y\in \Gamma_i$ for any $i\in\{1,\dots,r\}$, satisfy the following linear recursive 
equations:
\begin{displaymath}
G(x,y)  =  \delta_{x}(y) + \sum_{w\in \Gamma_i} p(x,w)\, G(w,y) +
\sum_{\substack{xwh\in\Gamma,\\ \Vert xwh\Vert=2}} p(x,xwh) \sum_{h'\in H} H(wh,h'|1)\,G(xh',y).
\end{displaymath}
This system of Green functions can be solved, when the functions $H(wh,h'|1)$
can be obtained by solving (\ref{h-equations}). We now define
$$
\varrho(i) := \Prob\bigl[X_\infty^{(1)}\in \Gamma_i\bigr] =
\sum_{h\in H} G(e,h|1) \sum_{g\in\Gamma_i^\times} \mu(g) \cdot \Bigl(1-
\sum_{h'\in H} H(hg,h'|1)\Bigr).
$$
By transience, $\sum_{i=1}^r \varrho(i)=1$. Furthermore, 
$\Prob[Y_{g}=1] =  1-\varrho\bigl( \tau(g)\bigr)$ and
$$
\Prob[Y_{g}=-1]  =  \sum_{h\in H} F(e,g^{-1}h) \cdot \bigl(1-\varrho\bigl(
\tau(g)\bigr)\bigr)
= \frac{1-\varrho\bigl(\tau(g)\bigr)}{G(e,e)} \sum_{h\in H} G(e,g^{-1}h).
$$
By Lebesgue's Dominated Convergence Theorem,
$$
\E\bigl[\Vert X_{n+1}\Vert \bigr] - \E\bigl[\Vert X_{n}\Vert\bigr]
\xrightarrow{n\to\infty} \sum_{i=1}^r \sum_{g\in\Gamma_i^\times} \mu(g)
\Bigl( \Prob[Y_{g}=1]- \Prob[Y_{g}=-1]\Bigr).
$$
But this limit must be the rate of escape $\ell$. Thus:
\begin{Cor}\label{lp-formula}
$$
\ell =  
\lim_{n\to\infty} \frac{\Vert X_n\Vert}{n}=
\sum_{i=1}^r \biggl[\mu(\Gamma_i^\times) \bigl(1-\varrho(i)\bigr)
- \frac{1-\varrho(i)}{G(e,e)} \sum_{g\in\Gamma_i^\times}
\sum_{h\in H} \mu(g) G(e,g^{-1}h)\biggr].
$$
\end{Cor}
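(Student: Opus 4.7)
The plan is first to use the telescoping identity displayed just before the corollary,
\[
\E[\Vert X_{n+1}\Vert]-\E[\Vert X_n\Vert]=\sum_{g\in\Gamma}\mu(g)\int_\Gamma\bigl(\Vert gX_n\Vert-\Vert X_n\Vert\bigr)\,d\mu^{(n)},
\]
together with the prefix-convergence $X_n\to X_\infty\in\Gamma_\infty$, which is a standard consequence of transience and the tree-like structure of $\Gamma$. For each $g$ in the finite support of $\mu$ the integrand takes values in $\{-1,0,1\}$ (using $\Vert hx\Vert=\Vert x\Vert$ for $h\in H$) and converges almost surely to $Y_g$; by bounded convergence the integral tends to $\E[Y_g]=\Prob[Y_g=1]-\Prob[Y_g=-1]$. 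Since $Y_h=0$ for $h\in H$ and the outer sum is finite, we obtain
\[
\E[\Vert X_{n+1}\Vert]-\E[\Vert X_n\Vert]\xrightarrow{n\to\infty}\sum_{i=1}^r\sum_{g\in\Gamma_i^\times}\mu(g)\bigl(\Prob[Y_g=1]-\Prob[Y_g=-1]\bigr).
\]

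Next I would evaluate the two probabilities. From the definition of $Y_g$ one reads off $\Prob[Y_g=1]=\Prob[X_\infty^{(1)}\notin\Gamma_{\tau(g)}]=1-\varrho(\tau(g))$, with $\varrho(i)$ given by the formula displayed just before the corollary. For $\Prob[Y_g=-1]=\Prob[X_\infty^{(1)}\in g^{-1}H]$ I would decompose over visits to the individual elements $g^{-1}h$ of the coset, apply the strong Markov property together with the cone self-similarity of $\Gamma$ (so that from $g^{-1}h$ the asymptotic first letter lies in $g^{-1}H$ with probability $1-\varrho(\tau(g))$), and combine this with the homogeneity identity $F(e,y)=G(e,y)/G(e,e)$, valid because $G(y,y)=G(e,e)$ for a group random walk, to obtain
\[
\Prob[Y_g=-1]=\frac{1-\varrho(\tau(g))}{G(e,e)}\sum_{h\in H}G(e,g^{-1}h).
\]
Substituting both expressions into the previous display yields precisely the right-hand side of the corollary.

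Finally I would identify the resulting limit with $\ell$. By Malyshev, $\Vert X_n\Vert/n\to\ell$ almost surely; because $\Vert X_n\Vert\leq n$, bounded convergence upgrades this to $\E[\Vert X_n\Vert]/n\to\ell$, and Ces\`aro averaging then forces the convergent difference sequence $\E[\Vert X_{n+1}\Vert]-\E[\Vert X_n\Vert]$ to share the same limit $\ell$. The main obstacle I anticipate is the careful derivation of the formula for $\Prob[Y_g=-1]$: the walk may in principle revisit several elements of the coset $g^{-1}H$, and the clean sum-over-$h$ form depends essentially on the cone self-similarity (from any point of a coset of $H$ the continuation of the walk is distributed as an independent copy of the original walk started at $e$). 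Everything else reduces to routine manipulation of finite sums and an application of dominated convergence.
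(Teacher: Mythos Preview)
Your proposal is correct and follows essentially the same route as the paper: the telescoping identity for $\E[\Vert X_{n+1}\Vert]-\E[\Vert X_n\Vert]$, almost-sure convergence of $\Vert gX_n\Vert-\Vert X_n\Vert$ to the $\{-1,0,1\}$-valued variable $Y_g$ via $X_n\to X_\infty$, dominated convergence to pass to the limit, the identification $\Prob[Y_g=-1]=\sum_{h\in H}F(e,g^{-1}h)\bigl(1-\varrho(\tau(g))\bigr)$ together with $F(e,y)=G(e,y)/G(e,e)$, and finally the Ces\`aro argument to match the limit of the increments with $\ell$. Your write-up is in fact slightly more explicit than the paper's on two points it leaves implicit: the upgrade from almost-sure convergence of $\Vert X_n\Vert/n$ to convergence of expectations (you invoke $\Vert X_n\Vert\le n$ and bounded convergence), and the potential multiple visits to the coset $g^{-1}H$ in the derivation of $\Prob[Y_g=-1]$, which you correctly flag as the one place requiring care.
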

As a final remark observe that the formulas of Corollaries \ref{exittime-formula},
\ref{dgf-formula} and \ref{lp-formula} have complexities in decreasing order:
while the computation of the rate of escape by Corollary \ref{exittime-formula}
needs three systems of equations to be solved and derivatives to be calculated,
the computation by Corollaries \ref{dgf-formula} or \ref{lp-formula} needs
only two systems of equations to be solved, while the formula in Corollary
\ref{dgf-formula} deals also with derivatives.

\section{Sample Computations}
\label{sample}

\subsection{A Regular Language}
Let be $A=\{a,b,c\}$ and we set $l(a)=l(b)=l(c)=1$. We consider the set $\mathcal{L}$ of all words over the
alphabet $A$, such that in each $w\in\mathcal{L}$ the letter $b$ is the first
letter of $w$ or follows after the letter $a$ and the letter $c$ may only appear after the letter
$b$; e.g., $abcaba \in \mathcal{L}$, but $abcba \notin \mathcal{L}$. Consider the random walk on $\mathcal{L}$ given by the following
transition probabilities:
\begin{eqnarray*}
&&p(aa,aaa)=\frac{1}{3},\ p(aa,aab)=\frac{1}{3},\ p(aa,a)=\frac{1}{3},\ p(ab,aba)=\frac{1}{6},\ p(ab,abc)=\frac{1}{3},\ p(ab,a)=\frac{1}{2},\\
&&p(ba,baa)=\frac{1}{4},\ p(ba,bca)=\frac{1}{4},\ p(ba,bab)=\frac{1}{4},\ p(ba,a)=\frac{1}{4},\\
&&p(bc,bca)=\frac{1}{2},\ p(bc,a)=\frac{1}{2},\ \
p(ca,caa)=\frac{1}{4},\ p(ca,cab)=\frac{1}{2},\ p(ca,a)=\frac{1}{4}.
\end{eqnarray*}
Note that it is not necessary to specify any further transition probabilities,
as the formula for the rate of escape does not depend on the transition probabilities of the form 
\mbox{$\Prob[X_{n+1}=w' \mid X_n=w]$,} where $w\in \{\varepsilon,a,b,c\}$. The
system of equations \ref{h-equations} is then
\begin{eqnarray*}
H(aa,a|z) & = & \frac{z}{3}  \bigl( H(aa,a|z)\cdot H(aa,a|z) + H(ab,a|z)\cdot
H(aa,a|z)+1\bigr),\\
H(ab,a|z) & = & \frac{z}{3}  H(bc,a|z)\cdot H(aa,a|z) + \frac{z}{6} H(ba,a|z)\cdot
H(aa,a|z) + \frac{z}{2},\\
H(ba,a|z) & = & \frac{z}{4}  \bigl( H(aa,a|z)\cdot H(ba,a|z) + H(ca,a|z)\cdot
H(ba,a|z)  
+ H(ab,a|z)\cdot H(ba,a|z) +1 \bigr),\\
H(bc,a|z) & = & \frac{z}{2} H(ca,a|z)\cdot H(ba,a|z) + \frac{z}{2},\\
H(ca,a|z) & = & \frac{z}{4} H(aa,a|z)\cdot H(ca,a|z) + \frac{z}{2}
H(ab,a|z)\cdot H(ca,a|z) + \frac{z}{4}.
\end{eqnarray*}
This system in the unknown variables $H(aa,a|z)$, $H(ab,a|z)$, $H(ba,a|z)$,
$H(bc,a|z)$ and $H(ca,a|z)$, where $z$ appears as a parameter, can be solved
with the help of \textsc{Mathematica}. With these solutions we can compute the
modified Green functions $\overline{G}(\cdot,\cdot|z)$ by solving the linear
system (\ref{g-system}). Note that only $\overline{G}(aa,aa|z)$, $\overline{G}(ab,aa|z)$, $\overline{G}(ba,ba|z)$, $\overline{G}(bc,ba|z)$, $\overline{G}(ca,ca|z)$ are non-zero functions.
Moreover, we get
\begin{eqnarray*}
\xi(aaa) & = & \xi(baa)=\xi(caa) =\frac{1}{3} \bigl( 1- H(aa,a|1)\bigr) + \frac{1}{3} \bigl( 1-
H(ab,a|1)\bigr),\\
\xi(aab) & = & \xi(bab)=\xi(cab)=\frac{1}{6} \bigl( 1- H(ba,a|1)\bigr) + \frac{1}{3} \bigl( 1-
H(bc,a|1)\bigr),\\
\xi(aba) & = & \frac{1}{4} \bigl( 1- H(aa,a|1)\bigr) + \frac{1}{4} \bigl( 1-
H(ca,a|1)\bigr) + \frac{1}{4} \bigl(1- H(ab,a)\bigr),\\
\xi(abc) & = & \frac{1}{2} \bigl( 1- H(ca,a|1)\bigr),\
\xi(bca)  =  \frac{1}{4} \bigl( 1- H(aa,a|1)\bigr) + \frac{1}{2} \bigl( 1-
H(ab,a|1)\bigr).
\end{eqnarray*}
Since $\nu(abc)=\sum_{def\in \overline{A}^3} \nu(def)\ \tilde q(def,abc)$, we can compute
the invariant measure as
$$
\begin{array}{c}
\nu(aaa)=0.32475,\, \nu(aab)=0.13194,\, \nu(aba)=0.12597, \,
\nu(abc)=0.08021, \, \nu(baa)=0.05350,\\[1ex]
\nu(bca)=0.13095, \,
\nu(bab)=0.02174, \, \nu(caa)=0.07844, \, \nu(cab)=0.05251.
\end{array}
$$
Now we have all necessary ingredients to compute
$\Lambda =3.78507$, and finally we get the rate of escape as $\ell=0.264196$.

\subsection{$\Z/d\Z \ast_{\Z/2\Z} \Z/d\Z$}

Consider the free product by amalgamation $\Z/d\Z \ast_{\Z/2\Z} \Z/d\Z$,
$d\in\N$ even, over the common subgroup $\Z/2\Z$. Suppose that $\Z/d\Z$ is
generated by some element $a$ with $a^6$ equal to the identity. Setting
$\mu_1(a)=\mu_2(a)=1$ and $\alpha_1=\alpha_2=1/2$ we get the following values
for the rate of escape $\ell$ w.r.t. $\|\cdot \|$:
$$
\begin{array}{c|c|c|c|c}
d & 6 & 8 & 10 & 12\\
\hline
\ell & 0.24749 & 0.40859 & 0.46144 & 0.47543
\end{array}
$$


\bibliographystyle{abbrv}
\bibliography{literatur}

\begin{thebibliography}{10}

\bibitem{bender-richmond}
E.~Bender and L.~Richmond.
\newblock Central and local limit theorems applied to asymptotic enumeration
  {I}{I}: Multivariate generating functions.
\newblock {\em J. of Combinatorial Theory, Series A}, 34(3):255--265, 1983.

\bibitem{cartwright-soardi}
D.~Cartwright and P.~Soardi.
\newblock Random walks on free products, quotients, and amalgams.
\newblock {\em Nagoya Math. J.}, 102:163--180, 1986.

\bibitem{derrienic}
Y.~Derriennic.
\newblock Quelques applications du th\'eor\`eme ergodique sous-additif.
\newblock {\em Ast\'erisque}, 74:183--201, 1980.

\bibitem{drmota94}
M.~Drmota.
\newblock Asymptotic distributions and a multivariate {D}arboux method in
  enumeration problems.
\newblock {\em J. of Combinatorial Theory, Series A}, 67:169--184, 1994.

\bibitem{drmota97}
M.~Drmota.
\newblock Systems of functional equations.
\newblock {\em Random Structures and Algorithms}, 10:103--124, 1997.

\bibitem{erschler2}
A.~Erschler.
\newblock On the asymptotics of drift.
\newblock {\em J. of Math. Sciences}, 121(3):2437--2440, 2004.

\bibitem{furstenberg}
H.~Furstenberg.
\newblock Non commuting random products.
\newblock {\em Trans. Amer. Math. Soc.}, 108:377--428, 1963.

\bibitem{malyshev2}
A.~Gairat, V.~Malyshev, M.~Menshikov, and K.~Pelikh.
\newblock Classification of {M}arkov chains describing the evolution of random
  strings.
\newblock {\em Russian Math. Surveys}, 50(2):237--255, 1995.

\bibitem{gilch}
L.~A. Gilch.
\newblock {\em Rate of Escape of Random Walks}.
\newblock PhD thesis, University of Technology Graz, Austria, 2007.

\bibitem{guivarch}
Y.~Guivarc'h.
\newblock Sur la loi des grands nombres et le rayon spectral d'une marche
  al\'eatoire.
\newblock {\em Ast\'erisque}, 74:47--98, 1980.

\bibitem{kaimanovich-vershik}
V.~Kaimanovich and A.~Vershik.
\newblock Random walks on discrete groups: boundary and entropy.
\newblock {\em Ann. of Probab.}, 11:457--490, 1983.

\bibitem{karlsson-ledrappier07}
A.~Karlsson and F.~Ledrappier.
\newblock Linear drift and poisson boundary for random walks.
\newblock {\em Pure Appl. Math. Q.}, 3:1027--1036, 2007.

\bibitem{kingman}
J.~Kingman.
\newblock The ergodic theory of subadditive processes.
\newblock {\em J. Royal Stat. Soc., Ser. B}, 30:499--510, 1968.

\bibitem{lalley}
S.~Lalley.
\newblock Random walks on regular languages and algebraic systems of generating
  functions.
\newblock {\em Algebraic Methods in Statistics and Probability}, 2000.

\bibitem{ledrappier}
F.~Ledrappier.
\newblock Some asymptotic properties of random walks on free groups.
\newblock In {\em CRM Proceedings and Lecture Notes}, volume~28, pages
  117--152. CRM, 2001.

\bibitem{mairesse1}
J.~Mairesse and F.~Math\'eus.
\newblock Random walks on free products of cyclic groups and on {A}rtin groups
  with two generators.
\newblock {\em Research Report LIAFA 2004-006, Univ. Paris 7}, 2004.

\bibitem{malyshev}
V.~Malyshev.
\newblock Stabilization laws in the evolution of a random string.
\newblock {\em Problems Inform. Transmission}, 30:260--274, 1995.

\bibitem{malyshev-inria}
V.~Malyshev.
\newblock Interacting strings of characters.
\newblock Technical Report 3057, INRIA, 1996.

\bibitem{woess2}
T.~Nagnibeda and W.~Woess.
\newblock Random walks on trees with finitely many cone types.
\newblock {\em J. Theoret. Probab.}, 15:399--438, 2002.

\bibitem{picardello-woess}
M.~Picardello and W.~Woess.
\newblock Random walks on amalgams.
\newblock {\em Monatshefte Mathematik}, 100:21--33, 1985.

\bibitem{sawyer}
S.~Sawyer and T.~Steger.
\newblock The rate of escape for anisotropic random walks in a tree.
\newblock {\em Probab. Theory Related Fields}, 76:207--230, 1987.

\bibitem{varopoulos}
N.~T. Varopoulos.
\newblock Long range estimates for {M}arkov chains.
\newblock {\em Bull. Sc. math.}, 109:225--252, 1985.

\bibitem{woess3}
W.~Woess.
\newblock Nearest neighbour random walks on free products of discrete groups.
\newblock {\em Boll. Un. Mat. Ital.}, 5-B:961--982, 1986.

\bibitem{woess}
W.~Woess.
\newblock {\em Random Walks on Infinite Graphs and Groups}.
\newblock Cambridge University Press, 2000.

\bibitem{yambartsev-zamyatin}
A.~Yambartsev and A.~Zamyatin.
\newblock A stabilization law for two semi-infinite interacting strings of
  characters.
\newblock {\em Bull.Braz.Math.Soc., New Series}, 34(3):361--388, 2003.

\end{thebibliography}

\end{document}